\documentclass[12pt,reqno,draft]{article} 
\usepackage{amsmath,amssymb,amsthm,amsfonts, indentfirst, empheq}
\usepackage{enumerate,color,bm,graphicx,here}
\usepackage{multicol}
\topmargin=-1cm
\oddsidemargin=0cm
\pagestyle{plain}
\textwidth=16cm
\textheight=23cm 

\makeatletter
\def\@cite#1#2{[{{\bfseries #1}\if@tempswa , #2\fi}]}
\renewcommand{\section}{%
\@startsection{section}{1}{\z@}
{0.5truecm plus -1ex minus -.2ex}%
{1.0ex plus .2ex}{\bfseries\large}}
\def\@seccntformat#1{\csname the#1\endcsname.\ }
\makeatother

\setlength\arraycolsep{2pt}

\numberwithin{equation}{section} 
\pagestyle{plain}
\theoremstyle{theorem}
\newtheorem{thm}{Theorem}[section]

\newtheorem{lem}[thm]{Lemma}

\theoremstyle{definition}
\newtheorem{df}{Definition}[section]
\newtheorem{remark}{Remark}[section]
\newtheorem*{prth1.1}{Proof of Theorem 1.1}
\newtheorem*{prth1.2}{Proof of Theorem 1.2}

\newcommand{\ep}{\varepsilon}
\newcommand{\pa}{\partial}
\newcommand{\R}{\mathbb{R}}
\newcommand{\N}{\mathbb{N}}
\newcommand{\cl}[1]{{\overline#1}}
\newcommand{\Ombar}{\cl{\Omega}}

\newcommand{\Tmaxe}{T_{{\rm max}, \ep}}

\newcommand{\ue}{u_{\ep}}

\newcommand{\ve}{v_{\ep}}

\newcommand{\io}{\int_{\Omega}}

\newcommand{\upto}{\nearrow}
\newcommand{\dwto}{\searrow}

\newcommand{\nrm}[2]{\| #1 \|_{ #2 (\Omega)}}

\newcommand{\norm}[2]{\| #1(\cdot, t) \|_{ #2 (\Omega)}}

\newcommand{\intd}[1]{{\rm d}#1}
\newcommand{\wcont}{C^0_{\rm w-\star}}
\newcommand{\wc}{\rightharpoonup}
\newcommand{\wsc}{\stackrel{\star}{\rightharpoonup}}

\newcommand{\baru}{\overline{u_0}}
\newcommand{\ub}{\overline{u}}
\newcommand{\vb}{\overline{v}}

\newcommand{\uue}{U_{\ep}}
\newcommand{\vve}{V_{\ep}}

\DeclareBoldMathCommand{\bmass}{1}
%
\begin{document}
\footnote[0]
    {2020{\it Mathematics Subject Classification}\/. 
    Primary: 35B35; Secondary: 35K55, 35Q92, 92C17.
    }
\footnote[0]
    {{\it Key words and phrases}\/: 
    Keller--Segel system; chemotaxis; flux limitation; nonlinear production.}
\begin{center}
    \Large{{\bf Stability of constant equilibria in a Keller--Segel system with gradient dependent chemotactic sensitivity and sublinear signal production}}
\end{center}
\vspace{5pt}
\begin{center}
    Shohei Kohatsu
   \footnote[0]{
    E-mail: 
    {\tt sho.kht.jp@gmail.com}
    }\\
    \vspace{12pt}
    Department of Mathematics, 
    Tokyo University of Science\\
    1-3, Kagurazaka, Shinjuku-ku, 
    Tokyo 162-8601, Japan\\
    \vspace{2pt}
\end{center}
\begin{center}    
    \small \today
\end{center}

\vspace{2pt}
\newenvironment{summary}
{\vspace{.5\baselineskip}\begin{list}{}{%
     \setlength{\baselineskip}{0.85\baselineskip}
     \setlength{\topsep}{0pt}
     \setlength{\leftmargin}{12mm}
     \setlength{\rightmargin}{12mm}
     \setlength{\listparindent}{0mm}
     \setlength{\itemindent}{\listparindent}
     \setlength{\parsep}{0pt}
     \item\relax}}{\end{list}\vspace{.5\baselineskip}}
\begin{summary}
{\footnotesize {\bf Abstract.}
This paper deals with the homogeneous Neumann boundary-value problem
for the Keller--Segel system
\[
   \begin{cases}
    u_t=\Delta u - \chi \nabla \cdot (u|\nabla v|^{p-2}\nabla v),
  \\
    v_t=\Delta v - v + u^{\theta}
   \end{cases}
\]
in $n$-dimensional bounded smooth domains
for suitably regular nonnegative initial data,
where $\chi > 0$, $p \in (1, \infty)$ and $\theta \in (0,1]$.
Under smallness conditions on $p$ and $\theta$,
we prove that the spatially homogeneous equilibrium solution
is stable.
This generalizes the result in \cite{KoY1}
from the case $\theta = 1$ to more general values of $\theta$.
} 
\end{summary}
\vspace{10pt}

\newpage
\section{Introduction}\label{Sec:Intro}

This paper deals with the Keller--Segel system with gradient dependent
chemotactic sensitivity and sublinear signal production,
\begin{align}\label{KSPP}
  \begin{cases}
    u_t=\Delta u - \chi \nabla \cdot (u |\nabla v|^{p-2} \nabla v)
    & \mbox{in}\ \Omega \times (0, \infty),
  \\
    v_t = \Delta v - v + u^{\theta}
    & \mbox{in}\ \Omega \times (0, \infty),
  \\
    \nabla u \cdot \nu=\nabla v \cdot \nu=0
    & \mbox{on}\ \pa\Omega \times (0, \infty),
  \\
    u(\cdot ,0) = u_0, \ v(\cdot, 0) = v_0
    & \mbox{in}\ \Omega
  \end{cases}
\end{align}
in a smoothly bounded domain $\Omega \subset \R^n$ $(n \in \N)$,
where $\chi > 0$, $p \in (1, \infty)$ and $\theta \in (0,1]$ are constants,
where $\nu$ is the outward normal vector to $\pa\Omega$,
and where $u_0, v_0$ satisfy
%
\begin{align}\label{initial}
\begin{cases}
		u_0 \in C^0(\cl{\Omega}), 
	\quad 
		u_0 \ge 0\ \,{\rm in}\ \cl{\Omega}
	\quad 
		{\rm and}
	\quad 
		u_0 \neq 0,
	\\
		v_0 \in W^{1,\infty}(\Omega)
	\quad
		{\rm and}
	\quad
		v_0 \ge 0\ \,{\rm in}\ \cl{\Omega}.
\end{cases}
\end{align}

When $p = 2$ and $\theta =1$, the system \eqref{KSPP} is reduced to the classical Keller--Segel
system proposed in \cite{KS-1970, KS-1971}
to model the behavior of slime molds with density $u$,
which are attracted by the chemoattractant with density $v$ they produce themselves.
In this framework, global existence and large-time behavior of solutions have been investigated
(see for instance \cite{OY-2001, W-2010, C-2015}, and also \cite{W-2014, C-2017} for
the case of logistic source).
The case of nonlinear production $(\mbox{i.e.}\ \theta \neq 1)$ was initially studied by
Liu and Tao~\cite{LT-2016},
where they obtained global existence and boundedness of classical solutions,
provided that $0 < \theta < \frac{2}{n}$.
On the other hand,
Winkler~\cite{W-2018} considered the parabolic--elliptic setting,
and proved existence of initial data such that the corresponding solutions
to the system blow up in finite time
when $\theta > \frac{2}{n}$.
Similar results for
the other chemotaxis systems with nonlinear production can be found
in \cite{GST-2016, HT-2017, VW-2020, FV-2021}, for example.

In light of refined approaches in the recent modeling literature
(e.g.\ \cite{BBNS-2010}),
chemotaxis systems with the flux-limited term $-\chi \nabla \cdot (u|\nabla v|^{p-2}\nabla v)$
have been introduced by Negreanu and Tello~\cite{NT-2018},
where they considered the parabolic--elliptic relevant of \eqref{KSPP} with $\theta = 1$,
and obtained uniform boundedness of $u(\cdot, t)$ in $L^{\infty}(\Omega)$ when
$p \in (1, \infty)$ $(n=1)$, and $p \in (1, \frac{n}{n-1})$ $(n \ge 2)$.
For the parabolic--parabolic case, Yan and Li~\cite{YL-2020}
studied the system \eqref{KSPP} with $\theta = 1$,
and established global existence and boundedness of weak solutions
in the case $p \in (1, \frac{n}{n-1})$ $(n \ge 2)$.
More recently, in \cite{KoY1},
stability of constant equilibria was established in the case that
$p \in (1,2)$ $(n = 1)$,
and that $p \in (1, \frac{n}{n-1})$ $(n \ge 2)$,
and also its asymptotic stability was revealed when $p \in [2, \infty)$ and $n = 1$.
The case of chemotaxis-fluid system with flux limitation and nonlinear production
can be found in \cite{W-2023}.
However,
large-time behavior of solutions to flux-limited model with nonlinear production,
or even with sublinear production (i.e.\ $\theta < 1$), has not
been investigated yet.
Thus the main purpose of this paper is to reveal behavior of solutions to
the system \eqref{KSPP} with $\theta < 1$ and $p \in (1, \infty)$.

Due to singularity of the flux-limited term $-\chi \nabla \cdot (u|\nabla v|^{p-2}\nabla v)$,
we cannot expect global existence of \emph{classical solutions} to \eqref{KSPP}
in general.
Thus, we will consider existence of \emph{weak solutions} to \eqref{KSPP} in the following
sense.

%
%
\begin{df}\label{df1}
Let $u_0$ and $v_0$ satisfy \eqref{initial}.
Let $T > 0$.
A pair $(u,v)$ of functions is called a \emph{weak solution} of \eqref{KSPP} in $\Omega \times (0,T)$ if
\begin{enumerate}[{\rm (i)}]
\item
$u \in \wcont ([0, T); L^{\infty}(\Omega))$, and
$v \in L^{\infty}( \cl{\Omega} \times [0,T) ) \cap L^2( [0,T); W^{1,2}(\Omega) )$,
\item
$u \ge 0 \quad \mbox{a.e.}\ \mbox{on}\ \Omega \times (0,T)$, and
$v \ge 0 \quad \mbox{a.e.}\ \mbox{on}\ \Omega \times (0,T)$,
\item
$| \nabla v |^{p-2} \nabla v \in L^2( \cl{\Omega} \times [0,T) )$,
\item $u$ has the mass conservation property
\[
    \int_{\Omega} u(\cdot, t) = \int_{\Omega} u_0 \quad\mbox{for a.a.}\ t \in (0, T),
\]
%
\item for any nonnegative $\varphi \in C_{\rm c}^{\infty}( \cl{\Omega} \times [0,T) )$,
%
\begin{align*}
    &- \int_{0}^{T} \int_{\Omega} u \varphi_t
    -\int_{\Omega} u_0 \varphi(\cdot, 0) 
    = \int_{0}^{T} \int_{\Omega} u \cdot \Delta \varphi
        + \chi \int_{0}^{T} \int_{\Omega} u | \nabla v |^{p-2} \nabla v \cdot \nabla \varphi,\\
    &-\int_{0}^{T} \int_{\Omega} v \varphi_t
        - \int_{\Omega} v_0 \varphi(\cdot, 0)
    = -\int_{0}^{T} \int_{\Omega} \nabla v \cdot \nabla \varphi
        - \int_{0}^{T} \int_{\Omega} v \varphi
        + \int_{0}^{T} \int_{\Omega} u^{\theta} \varphi.
\end{align*}
%
\end{enumerate}
%
If $(u,v) : \Omega \times (0, \infty) \to \R^2$ is a weak solution of \eqref{KSPP} in $\Omega \times (0,T)$ for all $T > 0$,
then $(u, v)$ is called a \emph{global weak solution} of \eqref{KSPP}.
\end{df}

Now the main result reads as follows.

%
%
\begin{thm}\label{thm1}
Let $n \in \N$.
Assume that $u_0$ and $v_0$ satisfy \eqref{initial}.
Let $\baru := \frac{1}{|\Omega|} \int_{\Omega} u_0$ and
$m := \| u_0 \|_{L^1(\Omega)}$.
Suppose that $\theta \in (0,1]$, and that
\begin{align}\label{con:p-weak}
  \begin{cases}
        p \in (1, \infty) 
        & \mbox{if}\ 0 < \theta \le \frac{1}{n},
\\ 
        p \in \left(1, \dfrac{n\theta}{n\theta-1}\right)
        &\mbox{if}\ \frac{1}{n} < \theta \le 1.
  \end{cases}
\end{align}
Then there exist a global weak solution $(u, v)$ of \eqref{KSPP}
%
and $t_1 > 0$ such that
\begin{align}\label{baru-m}
        \| u(\cdot, t) - \baru \|_{L^{\infty}(\Omega)}
        \leq C m^{\theta(p-1)+1} (1 + m^{\alpha} + m^{\beta})
    \quad
        \mbox{for all}\ t \ge t_1,
\end{align}
where $C > 0$, $\alpha > 0$ and $\beta > 0$ are constants.
In particular,
one can find $\eta_0 > 0$ such that for all $\eta \in (0, \eta_0)$,
whenever $u_0$ fulfills
\[
    \| u_0 \|_{L^1(\Omega)} \le \eta,
\]
%
$u$ satisfies
\begin{align}\label{stab_ex}
    \| u(\cdot, t) - \baru \|_{L^{\infty}(\Omega)} \le \eta
    \quad
        \mbox{for all}\ t \ge t_1.
\end{align}
\end{thm}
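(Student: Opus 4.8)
The plan is to obtain the solution as a limit of classical solutions to regularized problems and to carry every decisive bound uniformly in the regularization parameter, so that existence in the sense of Definition~\ref{df1} and the quantitative estimate~\eqref{baru-m} emerge together. Concretely, I would replace the singular/degenerate sensitivity $|\nabla v|^{p-2}\nabla v$ by the nondegenerate $(|\nabla v_\varepsilon|^2+\varepsilon)^{(p-2)/2}\nabla v_\varepsilon$ and, if needed, smooth the production near $u=0$, together with mollified nonnegative initial data, so that standard parabolic theory furnishes global classical solutions $(u_\varepsilon,v_\varepsilon)$ with $u_\varepsilon,v_\varepsilon\ge0$. Integrating the first equation gives the mass identity $\io u_\varepsilon(\cdot,t)=\io u_0=m$ for all $t>0$; this is the only conservation law available and the anchor for all later bounds.

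The core is a bootstrap that severs the circular dependence between $u_\varepsilon$ and $\nabla v_\varepsilon$ while recording the explicit dependence on $m$. The target power is predicted by scaling: under $u\mapsto\lambda u$ the production forces $v\sim\lambda^\theta$, hence the flux obeys $u|\nabla v|^{p-2}\nabla v\sim\lambda^{1+\theta(p-1)}$, which is exactly the exponent $\theta(p-1)+1$ in \eqref{baru-m}. To realize this rigorously, I would write the $v$-equation through the Neumann heat semigroup $(e^{t(\Delta-1)})_{t\ge0}$ and use the standard gradient smoothing estimates to bound $\|\nabla v_\varepsilon(\cdot,t)\|_{L^\infty}$ by $\sup_{s}\|u_\varepsilon(\cdot,s)\|_{L^r}^\theta$ for any $r>n$, the time integral converging precisely because $\tfrac12+\tfrac{n}{2r}<1$. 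Inserting this into the variation-of-constants formula for $u_\varepsilon$ and estimating the drift $u_\varepsilon(|\nabla v_\varepsilon|^2+\varepsilon)^{(p-1)/2}$ by Hölder's inequality yields a self-improving $L^r$ inequality. The hypothesis~\eqref{con:p-weak}, equivalent to $n\theta(p-1)<p$, is exactly what renders the resulting exponents subcritical, so the iteration closes and produces a uniform-in-time, uniform-in-$\varepsilon$ bound $\sup_{t}\|u_\varepsilon(\cdot,t)\|_{L^\infty}\le C(m)$; a parallel Moser-type iteration then pins down the $L^\infty$ bound with leading $m$-dependence $m$ times subleading corrections of the form $1+m^\alpha+m^\beta$.

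For the stability estimate itself I would test the equation for the deviation $w_\varepsilon:=u_\varepsilon-\overline{u_{\varepsilon}}$, which satisfies $\io w_\varepsilon=0$, against $w_\varepsilon$. The diffusion contributes $-\io|\nabla w_\varepsilon|^2$, dominating $-C_P\io w_\varepsilon^2$ by the Poincaré inequality, while the chemotactic term is absorbed after Young's inequality at the cost of $C\chi^2\|u_\varepsilon\|_{L^\infty}^2\|\nabla v_\varepsilon\|_{L^\infty}^{2(p-1)}$. Since the two factors are controlled by $m$ and $m^\theta$ (up to the subleading corrections) through the previous step, a Gronwall argument shows that $\|w_\varepsilon(\cdot,t)\|_{L^2}$ becomes, after a time $t_1$ independent of $\varepsilon$, of size $\lesssim m^{\theta(p-1)+1}(1+m^\alpha+m^\beta)$. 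A final semigroup bootstrap upgrades this $L^2$ smallness to the $L^\infty$ bound~\eqref{baru-m}, after which Aubin--Lions compactness (using the uniform bounds above, together with a uniform $L^{2(p-1)}$ bound on $\nabla v_\varepsilon$ to secure property (iii)) lets me extract a subsequence converging to a weak solution that inherits~\eqref{baru-m}, with $\overline{u_{\varepsilon}}\to\baru$ by mass conservation. Choosing $\eta_0$ so small that $C\eta_0^{\theta(p-1)}(1+\eta_0^\alpha+\eta_0^\beta)\le1$ then gives~\eqref{stab_ex}.

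I expect the main obstacle to be making the $m$-dependence genuinely explicit rather than merely qualitative: at every stage—the gradient regularity for $v_\varepsilon$, the closing of the $L^r$ bootstrap, and the Moser iteration—the constants must be tracked as powers of $m$ so that they assemble into the single bound $m^{\theta(p-1)+1}(1+m^\alpha+m^\beta)$, and this must be done uniformly in $\varepsilon$ despite the degeneracy/singularity of the regularized flux. A secondary difficulty is guaranteeing that $t_1$ and all constants are independent of $\varepsilon$, so that the estimate survives the limit passage and does not merely hold for the approximations.
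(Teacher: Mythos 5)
Your overall architecture (regularize the flux as $(|\nabla\ve|^2+\ep)^{\frac{p-2}{2}}\nabla\ve$, prove $\ep$-uniform estimates with explicit $m$-dependence by semigroup techniques, pass to the limit, and finally choose $\eta_0$ with $C\eta_0^{\theta(p-1)}(1+\eta_0^{\alpha}+\eta_0^{\beta})\le 1$) coincides with the paper's, and your scaling heuristic correctly predicts the exponent $\theta(p-1)+1$. The decisive step, however, is set up in a way that does not close. You propose to control $\nrm{\nabla\ve}{L^{\infty}}$ by $\sup_{s}\|\ue(\cdot,s)\|_{L^{r}(\Omega)}^{\theta}$ with $r>n$ (in fact one needs $\ue\in L^{r\theta}$ with $r>n$) and then feed this back into the Duhamel formula for $\ue$. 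The drift is then estimated by $\|\ue\|_{L^{r}(\Omega)}\,\nrm{\nabla\ve}{L^{\infty}}^{p-1}$, so the resulting inequality for $X:=\sup_{t}\norm{\ue}{L^{\infty}}$ has the superlinear form $X\le C+CX^{1+\theta(p-1)}$, which closes only under a smallness condition on the data; but Theorem~\ref{thm1} asserts global existence and \eqref{baru-m} for \emph{all} $u_0$ satisfying \eqref{initial}, with smallness entering only at the very last step. The paper avoids this circularity by never using an $L^{\infty}$ bound on $\nabla\ve$ when $\theta>\frac1n$: Lemma~\ref{Lem:vexLq} derives $\norm{\nabla\ve}{L^q}\le Ce^{-(1+\lambda_1)t}+Cm^{\theta}$ for $q<\frac{n}{n\theta-1}$ \emph{from the mass alone}, and \eqref{con:p-weak} (equivalently $n\theta(p-1)<p$, as you note) guarantees that some admissible $q$ exceeds $n(p-1)$; splitting the drift by H\"older with $\nabla\ve$ in this fixed $L^q$ and interpolating $\|\ue\|_{L^{qr/(q-r(p-1))}(\Omega)}\le m^{1-\kappa}\norm{\ue}{L^{\infty}}^{\kappa}$ gives $X\le C+CX^{\kappa}$ with $\kappa<1$, which closes unconditionally (Lemma~\ref{Lem:uebound}). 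Note also that for $n\theta>1$ your bootstrap cannot even be initialized, since pushing $\ue$ from $L^1$ into $L^{r\theta}$, $r>n$, already requires exactly this restricted-range gradient bound.

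A second, smaller issue: the claim that the uniform $L^{\infty}$ bound on $\ue$ has leading $m$-dependence $m$ cannot hold for all $t>0$, because $\norm{\ue}{L^{\infty}}\to\|u_0\|_{L^{\infty}(\Omega)}$ as $t\to 0$ and $\|u_0\|_{L^{\infty}(\Omega)}$ is not controlled by $m$. The paper keeps the initial layer as explicit decaying terms (the $Ce^{-t/r}$ in Lemma~\ref{Lem:ueLr}, the $I_1,I_2,I_3$ terms in Lemma~\ref{Linf-uue}) and discards them only for $t\ge t_1$; your Gronwall argument for $w_\ep:=\ue-\overline{u_\ep}$ would have to be run on a time interval where the large-time bounds are already in force. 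That said, your route to the deviation estimate (testing against $w_\ep$, Poincar\'e, Gronwall, then an $L^2\to L^{\infty}$ semigroup upgrade) is a legitimate alternative to the paper's direct four-term Duhamel decomposition for $\ue-\baru$, and would yield the same leading power $m^{\theta(p-1)+1}$, \emph{provided} the large-time bounds on $\|\ue\|_{L^{k_2}(\Omega)}$ and $\nrm{\nabla\ve}{L^{k_1(p-1)}}$ with explicit $m$-dependence are first secured along the lines above; the paper obtains the former by testing with $\ue^{r-1}$ and Gagliardo--Nirenberg in Lemma~\ref{Lem:ueLr}, again without ever invoking $\nrm{\nabla\ve}{L^{\infty}}$.
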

%
%
\begin{remark}
The terms $m^{\alpha}$ and $m^{\beta}$ in \eqref{baru-m} appear
as a result of estimating the integrals come from the flux-limited term by the
mass $m$
(Lemma~\ref{Lem:ueLr}).
Here we have to leave open the question, whether,
the values of $\alpha$ and $\beta$ in \eqref{baru-m} is optimal.
\end{remark}
%
%
\begin{remark}\label{u0min}
Considering $u - M$ and $v - M^{\theta}$
instead of $u$ and $v$
in Theorem~\ref{thm1},
where $M := \min_{x \in \cl{\Omega}} u_0 (x)$,
we can see that the property \eqref{stab_ex} can be established
even when $u_0$ fulfills
\begin{align}\label{stab}
  \| u_0 - M \|_{L^1(\Omega)} \le \eta
\end{align}
which means that the variation of $u_0$
in $\cl{\Omega}$ is sufficiently small.
We note that this implies stability of $\baru$.
Indeed, since
\begin{align}\label{indeed}
    \| u_0 - M \|_{L^1(\Omega)} \le |\Omega| \nrm{u_0 - \baru}{L^{\infty}}
\end{align}
holds, for any $\eta \in (0, \eta_0)$
we obtain \eqref{stab} from \eqref{indeed},
if we assume $\nrm{u_0 - \baru}{L^{\infty}} \le \delta := \frac{\eta}{|\Omega|}$.
\end{remark}

The proof of the main theorem is based on the arguments in \cite{KoY1}.
We will consider an approximate problem, and reveal its global classical solvability
in Section~\ref{Sec:App}.
In Lemma~\ref{Lem:uebound} we particularly derive uniform boundedness of
approximate solutions.
In Section~\ref{Sec:thm1} we establish \eqref{baru-m}
and complete the proof of Theorem~\ref{thm1}.
In order to obtain the estimate \eqref{baru-m},
we first prove that
  \[
   \| \ue(\cdot, t) - \baru \|_{L^{\infty}(\Omega)}
        \leq C m^{\theta(p-1)+1} (1 + m^{\alpha} + m^{\beta})
    \quad
        \mbox{for all}\ t \ge t_1
  \]
holds
with some $t_1 > 0$ which is independent of the approximate parameter $\ep$
(Lemma~\ref{Linf-uue}),
where $\ue$ is the first component of solutions to an approximate problem.
Then we let $\ep \dwto 0$ and construct a global weak solution
which satisfies \eqref{baru-m}.
The restriction $\theta \le 1$ will appear in Lemma~\ref{Lem:vexLq},
which gives an $L^q$-estimate for $\nabla \ve$, where $\ve$ is the second
component of solutions to an approximate problem.
The case $\theta > 1$ is left as an open.
%
%
\begin{remark}
Following the arguments in \cite[Lemmas 4.1--4.4]{JRT-2023},
we can construct a global weak solution $(u,v)$ of \eqref{KSPP}
which solves the second equation in the classical sense.
In this case, if we further assume that
$\nrm{v_0 - M^{\theta}}{L^1} \le \eta$ in Theorem~\ref{thm1}, then
we can conclude from \eqref{stab_ex}
with the aid of the parabolic regularity theory that
$\nrm{v(\cdot, t) - \baru^{\theta}}{L^{\infty}} \le \eta$
for all $t \ge t_1$,
which implies stability of constant equilibria $(\baru, \baru^{\theta})$.
\end{remark}

Throughout this paper,
we put $m := \|u_0\|_{L^1(\Omega)}$ as well as
$\baru := \frac{1}{|\Omega|}\io u_0$,
and we denote by $c_i$ generic positive constants.

\section{Global existence of approximate solutions}\label{Sec:App}

We will start by considering the approximate problem
\begin{align}\label{KSPP-R}
  \begin{cases}
    (\ue)_t=\Delta \ue - \chi \nabla \cdot 
                 \left(\ue (|\nabla \ve|^2 + \ep)^{\frac{p-2}{2}} \nabla \ve\right)
    & \mbox{in}\ \Omega \times (0, \infty),
  \\
    (\ve)_t=\Delta \ve - \ve + \ue^{\theta}
    & \mbox{in}\ \Omega \times (0, \infty),
  \\
    \nabla \ue \cdot \nu=\nabla \ve \cdot \nu=0
    & \mbox{on}\ \pa\Omega \times (0, \infty),
  \\
    \ue(\cdot,0)=u_0(x),\ \ve(\cdot,0)=v_0
    & \mbox{in}\ \Omega,
  \end{cases}
\end{align}
where $\ep \in (0,1)$.
Let us first give the result which states local existence in \eqref{KSPP-R}.

%
%
\begin{lem}\label{Lem:LocalApp}
Let $\chi > 0$, $p \in (1, \infty)$ and $\theta > 0$.
Then for each $\ep \in (0,1)$ there exist $\Tmaxe \in (0, \infty]$ and
uniquely determined functions
\begin{align}\label{Reg:Class}
      \begin{split}
        &\ue \in C^0(\Ombar \times [0, \Tmaxe))
        \cap C^{2,1}(\Ombar \times (0, \Tmaxe))
          \quad\mbox{and}
\\ 
        &\ve \in C^0(\Ombar \times [0, \Tmaxe))
        \cap C^{2,1}(\Ombar \times (0, \Tmaxe)) \cap
        \bigcap_{q > n} L^{\infty}_{\rm loc}
        ([0, \Tmaxe); W^{1,q}(\Omega))
  \end{split}
\end{align}
such that $(\ue, \ve)$ solves \eqref{KSPP-R}
in the classical sense in $\Omega \times (0, \Tmaxe)$,
that $\ue \ge 0$ and $\ve \ge 0$ in $\Ombar \times (0, \Tmaxe)$,
that fulfills
\begin{align}\label{LocalApp:mass}
    \int_{\Omega} \ue(\cdot, t) = \int_{\Omega} u_0
    = m
    \quad\mbox{for all}\ t \in (0, \Tmaxe),
\end{align}
and that
\begin{align}\label{Reg:criterion}
      \mbox{if}\ \Tmaxe < \infty,
  \quad\mbox{then}\quad
  \limsup_{t \upto \Tmaxe} \norm{\ue}{L^{\infty}} = \infty.
\end{align}
\end{lem}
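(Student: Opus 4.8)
The plan is to solve the regularized problem \eqref{KSPP-R} by a fixed-point argument on a short time interval and then to bootstrap regularity. The decisive structural feature is that the regularization $+\ep$ turns the flux limiter into the \emph{smooth} vector field $S_\ep(\xi):=(|\xi|^2+\ep)^{\frac{p-2}{2}}\xi$, which maps bounded sets of $\R^n$ to bounded sets; thus \eqref{KSPP-R} is a nondegenerate chemotaxis system that depends locally Lipschitz continuously on $\nabla\ve$, and the only genuinely non-smooth ingredient left is the production term $\ue^\theta$, which for $\theta<1$ is merely H\"older continuous near $\ue=0$. For this reason I would build existence on Schauder's fixed-point theorem rather than on the contraction mapping principle.

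Concretely, fix a small $T>0$ and let $\Phi$ act on a closed bounded convex subset of $C^0([0,T];W^{1,\infty}(\Omega))$, a space compatible with the datum $v_0\in W^{1,\infty}(\Omega)$ in \eqref{initial}. Given $v$, the drift $-\chi S_\ep(\nabla v)$ is bounded because $\nabla v\in L^\infty$, so the first equation is a linear drift--diffusion problem in divergence form whose Neumann solution $u=u[v]$ is nonnegative (test with the negative part) and continuous. Inserting the bounded source $u^\theta$ into the second equation and using the Neumann heat semigroup together with the gradient smoothing estimate $\|\nabla e^{\tau(\Delta-1)}f\|_{L^\infty(\Omega)}\le c\,\tau^{-1/2}e^{-\tau}\|f\|_{L^\infty(\Omega)}$ returns $\Phi(v)$ with $\nabla\Phi(v)$ again bounded; running the same computation in $L^q$ for each $q>n$ yields the regularity $\ve\in\bigcap_{q>n}L^\infty_{\rm loc}([0,\Tmaxe);W^{1,q}(\Omega))$ claimed in \eqref{Reg:Class}. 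Continuity of $\Phi$ follows from continuous dependence in the two linear problems, and compactness from the extra regularity (e.g.\ H\"older bounds on $\nabla\Phi(v)$) supplied by parabolic smoothing via Arzel\`a--Ascoli; for $T$ small $\Phi$ maps a ball into itself, so Schauder's theorem provides a fixed point, i.e.\ a local solution. A standard bootstrap then upgrades regularity: for $t>0$ the gradient $\nabla\ve$ is H\"older continuous in $(x,t)$, hence the coefficients of the first equation are H\"older continuous and interior Schauder estimates give $\ue,\ve\in C^{2,1}(\Ombar\times(0,\Tmaxe))$, while continuity up to $t=0$ is inherited from the data.

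The remaining qualitative properties are routine. Nonnegativity of $\ue$ comes from the divergence form of the first equation and the maximum principle, and then $\ve\ge0$ follows because its source $\ue^\theta$ and the datum $v_0$ are nonnegative. Mass conservation \eqref{LocalApp:mass} is obtained by integrating the first equation over $\Omega$ and using the no-flux conditions to annihilate $\io\Delta\ue$ and $\io\nabla\cdot(\ue S_\ep(\nabla\ve))$, whence $\ddt\io\ue=0$. The extensibility criterion \eqref{Reg:criterion} is the usual continuation argument: the length of the interval produced by the fixed point depends only on $\norm{\ue}{L^\infty}$ and $\|v_0\|_{W^{1,\infty}(\Omega)}$, so if $\limsup_{t\upto\Tmaxe}\norm{\ue}{L^\infty}$ were finite one could restart the construction and extend past $\Tmaxe$, contradicting maximality.

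The step I expect to be the main obstacle is \emph{uniqueness}, precisely because of the sublinear production. Subtracting two solutions and testing the difference equations in $L^2(\Omega)$, the drift terms are controlled by the smoothness of $S_\ep$, but the production contributes an integral that, via $|a^\theta-b^\theta|\le|a-b|^\theta$, only yields a differential inequality of the form $y'\le C(y+y^\theta)$ for the combined energy $y$; since $\int_0 y^{-\theta}\,\intd{y}<\infty$ for $\theta<1$, the Osgood condition fails and this inequality alone does not force $y\equiv0$. To close the argument I would exploit that, by the strong maximum principle, $\ue$ becomes strictly positive for $t>0$, so on every subinterval $[\tau,T]\subset(0,\Tmaxe)$ the map $s\mapsto s^\theta$ is Lipschitz on the range of $\ue$ and the energy estimate does force coincidence there; the initial layer $t\downarrow0$, where this lower bound degenerates, then has to be treated separately, and making this step rigorous is where I would expect to spend the most effort.
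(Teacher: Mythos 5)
Your route differs from the paper's: the paper runs a Banach fixed-point argument on the \emph{pair} $(\ue,\ve)$ in $X_{q,T}=C^0(\Ombar\times[0,T])\cap L^\infty(0,T;W^{1,q}(\Omega))$ with $q>n$, showing the Duhamel map $\Phi=(\Phi_1,\Phi_2)$ is a contraction ``similar to [YL-2020, Lemma 2.2]'', and then imports the regularity, nonnegativity, uniqueness and the criterion \eqref{Reg:criterion} from [HW-2005, Lemma 3.1] and [TW-2014, Lemma A.1]; mass conservation is by integration, as in your write-up. Your decision to switch to Schauder on $v$ alone is motivated by a correct and relevant observation: for $\theta\in(0,1)$ the map $s\mapsto s^\theta$ is only H\"older at $s=0$, and since \eqref{initial} allows $u_0$ to vanish on part of $\Omega$, the Lipschitz estimate needed to make $\Phi_2$ contractive is not automatic (the cited reference treats $\theta=1$). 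Your existence scheme, the bootstrap to $C^{2,1}$, the sign and mass arguments, and the continuation criterion are all sound in outline. One minor caveat: $C^0([0,T];W^{1,\infty}(\Omega))$ is an awkward ambient space because the Neumann heat semigroup is not strongly continuous on $W^{1,\infty}$ at $t=0$; the paper's choice $L^\infty(0,T;W^{1,q}(\Omega))$, $q>n$, avoids this.

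The genuine gap is uniqueness, which is part of the statement (``uniquely determined functions'') and which Schauder does not provide. Your proposed repair --- strict positivity of $\ue$ for $t>0$ via the strong maximum principle, making $s\mapsto s^\theta$ Lipschitz on $[\tau,T]$ --- does not close the argument, for exactly the reason you identify: with $u_0$ merely nonnegative, the lower bound on $\ue$ degenerates as $t\downarrow 0$, and on the initial layer the energy inequality $y'\le C(y+y^\theta)$ with $y(0)=0$ admits the nontrivial supersolution $y(t)=c\,t^{1/(1-\theta)}$, so the Osgood criterion fails and two solutions could in principle separate instantly. You flag this honestly but leave it open, so the proposal as written does not prove the lemma. (To be fair, the paper's own one-line treatment --- a contraction estimate borrowed from the $\theta=1$ case and a uniqueness citation to a linear-production reference --- passes over the same difficulty; a complete argument would need either a quantitative lower bound $\ue(\cdot,t)\ge c\,e^{t\Delta}u_0$ valid down to $t=0$ on the support of $u_0$, a regularization of $s^\theta$ near $s=0$, or an additional positivity hypothesis on $u_0$.)
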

\begin{proof}
%
Fixing $\ep \in (0,1)$,
we let
$X_{q,T} := C^0(\Ombar\times [0, T])\cap L^{\infty}(0,T; W^{1,q}(\Omega))$
for $q > n$ as well as $T>0$,
and consider the closed subset
\[
S_{q,T} := \{(\ue, \ve)\in X_{q,T} \mid \|(\ue, \ve)\|_{X_{q,T}} \le
R:=\|u_0\|_{L^{\infty}(\Omega)} + \|v_0\|_{W^{1,q}(\Omega)} + 1\}.
\]
Similar to the arguments in \cite[Lemma 2.2]{YL-2020},
for sufficiently small
$T=T(R)>0$ the map $\Phi = (\Phi_1, \Phi_2)$ on $S_{q,T}$, defined by
\begin{align*}
    &\Phi_1(\ue, \ve)(t) := e^{t\Delta}u_0 - \int_0^t e^{(t-s)\Delta}\nabla\cdot
      (\ue(|\nabla\ve|^2+\ep)^{\frac{p-2}{2}}\nabla\ve)(\cdot,s)\,\intd{s}
\\ \intertext{and}
    &\Phi_2(\ue, \ve)(t) := e^{t(\Delta - 1)} v_0 + \int_0^t e^{(t-s)(\Delta-1)} \ue^{\theta}(\cdot, s)\, \intd{s}
\end{align*}
for $(\ue,\ve)\in S_{q,T}$ and $t \in [0, T]$, is a contraction.
Thus, the Banach fixed point theorem enables us to find $(\ue, \ve) \in S_{q,T}$ which satisfies
$(\ue, \ve) = \Phi(\ue, \ve)$.
The criterion \eqref{Reg:criterion},
existence as a classical solution with the regularity \eqref{Reg:Class},
nonnegativity and uniqueness can be obtained in the same way as in
\cite[Lemma 3.1]{HW-2005}
(see also \cite[Lemma A.1]{TW-2014}),
while the property \eqref{LocalApp:mass} follows by integrating
the first equation of \eqref{KSPP-R}.
\end{proof}

In the sequel, for each $\ep \in (0,1)$ we let $\Tmaxe$ and $(\ue, \ve)$ be
as accordingly provided by Lemma~\ref{Lem:LocalApp}.
As a first step toward establishing global existence in \eqref{KSPP-R},
we derive an $L^q$-estimate for $\nabla \ve$ with some $q \ge 1$.

%
%
\begin{lem}\label{Lem:vexLq}
Let $\theta \in (0,1]$.
Then for any $q$ satisfying
\begin{align}\label{vexLq:q}
  \begin{cases}
        q \in [1, \infty] 
        & \mbox{if}\ 0 < \theta < \frac{1}{n},
\\ 
        q \in [1, \infty)
        &\mbox{if}\ \theta = \frac{1}{n},
\\ 
        q \in \left[1, \dfrac{n}{n\theta - 1}\right)
        &\mbox{if}\ \frac{1}{n} < \theta \le 1,
  \end{cases}
\end{align}
there exists $C > 0$ such that
\[
    \norm{\nabla \ve}{L^q} \le Ce^{-(1+\lambda_1)t} + Cm^{\theta}
    \quad\mbox{for all}\ t \in (0, \Tmaxe)\ \mbox{and}\ \ep \in (0,1).
\]
\end{lem}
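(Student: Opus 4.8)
The plan is to exploit the variation-of-constants representation of $\ve$ together with the smoothing properties of the Neumann heat semigroup. Since $\ve$ solves the linear problem $(\ve)_t = \Delta \ve - \ve + \ue^{\theta}$ with homogeneous Neumann data, we have
\[
    \ve(\cdot, t) = e^{t(\Delta - 1)} v_0 + \int_0^t e^{(t-s)(\Delta-1)} \ue^{\theta}(\cdot, s)\, \intd{s},
\]
and, applying $\nabla$,
\[
    \nabla \ve(\cdot, t) = \nabla e^{t(\Delta - 1)} v_0 + \int_0^t \nabla e^{(t-s)(\Delta-1)} \ue^{\theta}(\cdot, s)\, \intd{s}.
\]
I would estimate the two terms separately. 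For the homogeneous part, I would invoke the standard bound $\| \nabla e^{\tau \Delta} w \|_{L^q(\Omega)} \le c_1 e^{-\lambda_1 \tau} \nrm{\nabla w}{L^q}$ for $w \in W^{1,q}(\Omega)$, where $\lambda_1 > 0$ is the first nonzero Neumann eigenvalue of $-\Delta$. Writing $e^{t(\Delta - 1)} = e^{-t} e^{t\Delta}$ and using $v_0 \in W^{1,\infty}(\Omega) \embd W^{1,q}(\Omega)$ on the bounded domain $\Omega$, this gives
\[
    \| \nabla e^{t(\Delta-1)} v_0 \|_{L^q(\Omega)} \le c_1 e^{-(1+\lambda_1)t} \nrm{\nabla v_0}{L^q} \le c_2 e^{-(1+\lambda_1)t},
\]
which accounts for the first term in the asserted estimate.

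For the Duhamel term the essential input is mass conservation. Since $\ue \ge 0$ and $\theta \le 1$, a direct computation together with \eqref{LocalApp:mass} yields $\nrm{\ue^{\theta}(\cdot, s)}{L^{1/\theta}} = \left( \io \ue(\cdot, s) \right)^{\theta} = m^{\theta}$ for every $s$. Taking $r = \frac{1}{\theta} \ge 1$ in the $L^r$--$L^q$ gradient smoothing estimate $\| \nabla e^{\tau \Delta} w \|_{L^q(\Omega)} \le c_3 \bigl( 1 + \tau^{-\frac{1}{2} - \frac{n}{2}(\frac{1}{r} - \frac{1}{q})} \bigr) \nrm{w}{L^r}$ and pulling out the factor $e^{-(t-s)}$, I would obtain
\[
    \left\| \int_0^t \nabla e^{(t-s)(\Delta-1)} \ue^{\theta}(\cdot, s)\, \intd{s} \right\|_{L^q(\Omega)}
    \le c_3 m^{\theta} \int_0^t e^{-(t-s)} \bigl( 1 + (t-s)^{-\gamma} \bigr)\, \intd{s}, \qquad \gamma := \tfrac{1}{2} + \tfrac{n}{2}\bigl( \theta - \tfrac{1}{q} \bigr).
\]

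The crux of the argument, and the only place where the restriction on $q$ (hence on $\theta$) enters, is verifying that $\gamma < 1$: this makes the singularity $(t-s)^{-\gamma}$ integrable near $s = t$, while the exponential factor $e^{-(t-s)}$ secures convergence as $t - s \to \infty$. A direct computation shows $\gamma < 1 \iff n\theta - 1 < \frac{n}{q}$, and distinguishing the sign of $n\theta - 1$ recovers exactly the three regimes in \eqref{vexLq:q}: the condition holds for all $q \in [1, \infty]$ when $\theta < \frac{1}{n}$, for all $q \in [1, \infty)$ when $\theta = \frac{1}{n}$, and for $q \in [1, \frac{n}{n\theta - 1})$ when $\frac{1}{n} < \theta \le 1$. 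Once $\gamma < 1$ is secured, the substitution $\tau = t - s$ bounds the remaining integral by $\int_0^{\infty} e^{-\tau}(1 + \tau^{-\gamma})\, \intd{\tau} = 1 + \Gamma(1 - \gamma) =: c_4 < \infty$ uniformly in $t \in (0, \Tmaxe)$ and $\ep \in (0,1)$, so the Duhamel contribution is at most $c_3 c_4 m^{\theta}$. Adding the two estimates yields the claim with $C := \max\{c_2, c_3 c_4\}$. The main obstacle is thus not analytical subtlety but the precise bookkeeping of the exponent $\gamma$, in particular handling the borderline case $\theta = \frac{1}{n}$ and the endpoint $q = \infty$ correctly.
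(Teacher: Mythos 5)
Your proposal is correct and follows essentially the same route as the paper: the Duhamel representation of $\ve$, the $W^{1,q}$-decay of the homogeneous part, the $L^{1/\theta}$--$L^q$ gradient smoothing of the Neumann heat semigroup combined with mass conservation (note $\nrm{\ue^{\theta}}{L^{1/\theta}} = \nrm{\ue}{L^1}^{\theta} = m^{\theta}$, exactly as in the paper), and the integrability check $\frac{1}{2} + \frac{n}{2}(\theta - \frac{1}{q}) < 1$, which is precisely where condition \eqref{vexLq:q} enters. The only cosmetic difference is that you drop the extra $e^{-\lambda_1(t-\sigma)}$ factor from the smoothing estimate inside the Duhamel integral, which is harmless since $e^{-(t-\sigma)}$ already guarantees convergence.
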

\begin{proof}
As $\ve$ solves the second equation of \eqref{KSPP-R},
a combination of standard smoothing estimates for the Neumann
heat semigroup
(cf.\ \cite[Lemma~1.3~(ii) and (iii)]{W-2010})
and \eqref{LocalApp:mass}
yields
\begin{align*}
            \| \nabla \ve(\cdot, t) \|_{L^q(\Omega)}
        &\le e^{-t}
        \| \nabla e^{t \Delta} v_0 \|_{L^q(\Omega)}    
        + \int_{0}^{t} e^{-(t - \sigma)} 
            \|
                \nabla e^{ (t - \sigma) \Delta } \ue^{\theta}(\cdot, \sigma)
            \|_{L^q(\Omega)}
        \,\intd{\sigma}
\\
    &\le c_1 e^{-(1+\lambda_1)t} \nrm{\nabla v_0}{L^{\infty}}
\\
    &\quad\,+ c_1 \int_0^t (1 + (t - \sigma)^{-\frac{1}{2} - \frac{n}{2}(\theta - \frac{1}{q})})
        e^{-(1+\lambda_1)(t-\sigma)}\nrm{\ue(\cdot, \sigma)}{L^1}^{\theta}
        \,\intd{\sigma}
\\
    &\le c_1 e^{-(1+\lambda_1)t} \nrm{\nabla v_0}{L^{\infty}}
    + c_1 m^{\theta}
    \int_0^{\infty} (1 + \sigma^{-\frac{1}{2} - \frac{n}{2}
    (\theta - \frac{1}{q})}) e^{-(1+ \lambda_1)\sigma}\,\intd{\sigma}
\end{align*}
for all $t \in (0, \Tmaxe)$ and $\ep \in (0,1)$,
where $\lambda_1 > 0$ is the first nonzero
eigenvalue of $-\Delta$ in $\Omega$ under the Neumann boundary condition.
According to \eqref{vexLq:q}, we conclude
\[
c_2 := \int_0^{\infty} (1 + \sigma^{-\frac{1}{2} - \frac{n}{2}
    (\theta - \frac{1}{q})}) e^{-(1+ \lambda_1)\sigma}\,\intd{\sigma} < \infty,
\]
so that the statement follows for
$C := c_1 \max\{\nrm{\nabla v_0}{L^{\infty}}, c_2\}$.
\end{proof}

As a consequence of Lemma~\ref{Lem:vexLq},
we shall show that $\Tmaxe = \infty$ for every $\ep \in (0,1)$.

%
%
\begin{lem}\label{Lem:uebound}
Let $\theta \in (0,1]$, and suppose that $p$ satisfies \eqref{con:p-weak}.
Then there exists $C>0$ such that
\begin{align}\label{uebound:ue}
    \norm{\ue}{L^{\infty}} \le C
    \quad
    \mbox{for all}\ t \in (0, \Tmaxe)\ \mbox{and}\ \ep \in (0,1).
\end{align}
In particular, we have
\begin{align}\label{uebound:Tmaxe}
    \Tmaxe = \infty
    \quad\mbox{for any}\ \ep \in (0,1).
\end{align}
\end{lem}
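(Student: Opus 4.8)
The plan is to prove the $L^{\infty}$-bound \eqref{uebound:ue} by a standard testing-and-bootstrap argument, feeding the uniform $L^q$-estimate for $\nabla \ve$ from Lemma~\ref{Lem:vexLq} into an energy inequality for $\ue$. First I would fix a large exponent $r$ and test the first equation of \eqref{KSPP-R} against $\ue^{r-1}$, which after integration by parts and using the Neumann boundary condition yields an identity of the form
\begin{align*}
    \frac{1}{r}\ddt \io \ue^r
    + (r-1)\io \ue^{r-2}|\nabla \ue|^2
    = \chi(r-1)\io \ue^{r-1}(|\nabla\ve|^2+\ep)^{\frac{p-2}{2}}\nabla\ve\cdot\nabla\ue.
\end{align*}
The diffusion term on the left is rewritten as $\frac{4(r-1)}{r^2}\io|\nabla \ue^{r/2}|^2$, and the key point is to absorb the cross term on the right. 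Since $(|\nabla\ve|^2+\ep)^{\frac{p-2}{2}}|\nabla\ve| \le (|\nabla\ve|^2+\ep)^{\frac{p-1}{2}}$, the chemotactic integral is controlled by $|\nabla\ve|^{p-1}$ in a suitable $L^q$-norm, and here is precisely where the restriction \eqref{con:p-weak} on $p$ and $\theta$ enters: it guarantees that $q = q(p)$ lies in the admissible range \eqref{vexLq:q} so that $\|\nabla\ve\|_{L^q}$ is bounded uniformly in $t$ and $\ep$ by $c\,m^{\theta}$ (up to the exponentially decaying term).

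Next I would apply Young's inequality to split $\chi(r-1)\io \ue^{r-1}(|\nabla\ve|^2+\ep)^{\frac{p-2}{2}}\nabla\ve\cdot\nabla\ue$ into a piece that can be absorbed by the $\io|\nabla\ue^{r/2}|^2$ term on the left and a remainder of the form $c\io \ue^r |\nabla\ve|^{2(p-1)}$ (or a comparable power). The remaining integral is then estimated by the Gagliardo--Nirenberg inequality combined with Hölder's inequality, using the uniform-in-$\ep$ bound on $\nabla\ve$ in the appropriate $L^q$ space, to produce a differential inequality
\begin{align*}
    \ddt \io \ue^r + \io \ue^r \le c\left(\io\ue^{r/2}\right)^{a} + c,
\end{align*}
whose right-hand side involves a \emph{lower} power of the $\ue^{r/2}$-mass. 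A Moser-type iteration on $r$, or an inductive $L^r \to L^{2r}$ argument, then upgrades these bounds to a uniform $L^{\infty}$-bound; alternatively, once $\|\ue(\cdot,t)\|_{L^r}$ is bounded for some $r > n$, a semigroup representation of the first equation in \eqref{KSPP-R} together with smoothing estimates and the $L^q$-bound on $\nabla\ve$ gives \eqref{uebound:ue} directly. In either route the constants must be tracked to remain independent of $\ep$ and $t$, which is ensured because every estimate rests on \eqref{LocalApp:mass} and Lemma~\ref{Lem:vexLq}.

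Finally, \eqref{uebound:Tmaxe} follows immediately: the uniform bound \eqref{uebound:ue} contradicts the blow-up criterion \eqref{Reg:criterion}, so $\Tmaxe = \infty$ for every $\ep \in (0,1)$.

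The main obstacle I anticipate is the absorption step: the exponent $2(p-1)$ appearing on $|\nabla\ve|$ after Young's inequality must be matched against the admissible range of $q$ in \eqref{vexLq:q} through the Gagliardo--Nirenberg interpolation, and verifying that the interpolation exponent $a$ stays strictly below $2$ (so that the nonlinear term on the right is genuinely subcritical and does not overwhelm the dissipation) is exactly what forces the condition \eqref{con:p-weak}. Getting this balance right—and confirming that the borderline cases $\theta = \frac{1}{n}$ and the endpoint of the $p$-interval are handled by the open/closed structure of the ranges in \eqref{vexLq:q}—is the delicate part of the argument.
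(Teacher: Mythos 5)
Your argument is essentially correct, but it takes a genuinely different route from the paper's proof of this lemma. The paper does \emph{not} test the first equation of \eqref{KSPP-R} against $\ue^{r-1}$ here; instead it works entirely with the Duhamel representation of $\ue$. Concretely, using \eqref{con:p-weak} it picks $q$ admissible in \eqref{vexLq:q} with $n(p-1)<q$ and then $r>n$ with $r(p-1)<q$, applies the $L^{\infty}$-$L^r$ smoothing estimate for $e^{t\Delta}\nabla\cdot$ together with H\"older to bound the chemotactic Duhamel term by $\int_0^t(1+(t-\sigma)^{-\frac12-\frac{n}{2r}})e^{-\lambda_1(t-\sigma)}\nrm{\ue(\cdot,\sigma)}{L^{qr/(q-r(p-1))}}\,\intd{\sigma}$, and then interpolates $\nrm{\ue}{L^{qr/(q-r(p-1))}}\le m^{1-\kappa}\nrm{\ue}{L^{\infty}}^{\kappa}$ with $\kappa\in(0,1)$ using only mass conservation \eqref{LocalApp:mass}. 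This closes to the sublinear fixed-point inequality $\sup_{(0,T)}\norm{\ue}{L^{\infty}}\le c_4+c_4(\sup_{(0,T)}\norm{\ue}{L^{\infty}})^{\kappa}$, which immediately yields \eqref{uebound:ue} with no differential inequality, no Gagliardo--Nirenberg step, and no iteration. Your testing-plus-bootstrap scheme is the standard alternative and does go through under \eqref{con:p-weak} (the balance you worry about, $n(p-1)<\frac{n}{n\theta-1}$ when $\frac1n<\theta\le1$, is exactly the same structural condition the paper exploits); indeed the paper carries out precisely your energy computation later, in Lemma~\ref{Lem:ueLr}, but for the different purpose of tracking the $m$-dependence of the $L^r$ bounds needed for the stability estimate \eqref{baru-m}. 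The paper's route is shorter and avoids Moser iteration; yours costs more machinery at this stage but produces the quantitative $L^r$ information as a byproduct. Your final step deducing \eqref{uebound:Tmaxe} from the extensibility criterion \eqref{Reg:criterion} matches the paper.
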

\begin{proof}
From the representation formula for $\ue$ we have
\begin{align}
\notag
    &\norm{\ue}{L^{\infty}}
\\
   &\quad\, \le \nrm{e^{t\Delta} u_0}{L^{\infty}}
    + \chi \int_0^t
    \nrm{e^{(t-\sigma)\Delta}\nabla \cdot (
      \ue(\cdot, \sigma)(|\nabla \ve(\cdot, \sigma)|^2+\ep)^{\frac{p-2}{2}}
      \nabla \ve(\cdot, \sigma))}{L^{\infty}}\,\intd{\sigma}
\label{uebound:2}
\end{align}
for all $t \in (0, \Tmaxe)$ and $\ep \in (0,1)$.
Since the condition \eqref{con:p-weak} implies that
$n(p-1) < \frac{n}{n\theta - 1}$ holds when $\frac{1}{n} < \theta \le 1$,
we can choose $q$ satisfying \eqref{vexLq:q} and $n(p-1) < q$,
and hence we can also pick $r > n$ fulfilling $r(p-1) < q$.
Now Lemma~\ref{Lem:vexLq} asserts that
%
\[
    \|\nabla \ve(\cdot, \sigma)\|_{L^q(\Omega)} \le c_1
    \quad\mbox{for all}\ \sigma \in (0, \Tmaxe)\ 
    \mbox{and}\ \ep \in (0,1).
\]
This together with the known smoothing properties of the Neumann
heat semigroup (\cite[Lemma~1.3~(iv)]{W-2010})
and the H\"{o}lder inequality enables us to see that
\begin{align}
\notag
    &\int_0^t
    \nrm{e^{(t-\sigma)\Delta}\nabla \cdot (
      \ue(\cdot, \sigma)(|\nabla \ve(\cdot, \sigma)|^2+\ep)^{\frac{p-2}{2}}
      \nabla \ve(\cdot, \sigma))}{L^{\infty}}\,\intd{\sigma}
\\ \notag
    &\quad\,\le c_2 \int_0^t (1+(t-\sigma)^{-\frac{1}{2}-\frac{n}{2r}})
    e^{-\lambda_1(t-\sigma)}
    \nrm{\ue(\cdot, \sigma)}{L^{\frac{qr}{q-r(p-1)}}}
    \nrm{\nabla \ve(\cdot, \sigma)}{L^q}^{p-1}\,\intd{\sigma}
\\
    &\quad\,\le c_1^{p-1} c_2 \int_0^t (1+(t-\sigma)^{-\frac{1}{2}-\frac{n}{2r}})
    e^{-\lambda_1(t-\sigma)}
    \nrm{\ue(\cdot, \sigma)}{L^{\frac{qr}{q-r(p-1)}}}\,\intd{\sigma}
\label{uebound:1}
\end{align}
for all $t \in (0, \Tmaxe)$ and $\ep \in (0,1)$,
where $\lambda_1 > 0$ is the first nonzero
eigenvalue of $-\Delta$ in $\Omega$ under the Neumann boundary condition.
Moreover, it follows from \eqref{LocalApp:mass} that
\[
    \nrm{\ue(\cdot, \sigma)}{L^{\frac{qr}{q-r(p-1)}}}
    \le \nrm{\ue(\cdot, \sigma)}{L^{\infty}}^{\kappa}
    \nrm{\ue(\cdot, \sigma)}{L^1}^{1-\kappa}
    = m^{1-\kappa} \nrm{\ue(\cdot, \sigma)}{L^{\infty}}^{\kappa}
\]
for all $\sigma \in (0, \Tmaxe)$ and $\ep \in (0,1)$, where
$\kappa := 1 - \frac{q - r(p-1)}{qr} \in (0,1)$.
In conjunction with \eqref{uebound:1}, this implies that
for any $T \in (0, \Tmaxe)$ and $\ep \in (0, 1)$,
\begin{align*}
    &\int_0^t
    \nrm{e^{(t-\sigma)\Delta}\nabla \cdot (
      \ue(\cdot, \sigma)(|\nabla \ve(\cdot, \sigma)|^2+\ep)^{\frac{p-2}{2}}
      \nabla \ve(\cdot, \sigma))}{L^{\infty}}\,\intd{\sigma}
\\
    &\quad\, \le c_3
    \left(\sup_{t \in (0,T)} \norm{\ue}{L^{\infty}}\right)^{\kappa}
\end{align*}
for all $t \in (0, T)$, because
$\int_0^{\infty} (1+\sigma^{-\frac{1}{2}-\frac{n}{2r}})
    e^{-\lambda_1\sigma}
    \,\intd{\sigma} < \infty$.
Inserting this into \eqref{uebound:2} and applying the maximum principle,
we thus find that
\[
    \sup_{t \in (0,T)}\norm{\ue}{L^{\infty}}
    \le c_4 + c_4 \left(\sup_{t \in (0,T)} \norm{\ue}{L^{\infty}}\right)^{\kappa}
\]
for all $T \in (0, \Tmaxe)$ and $\ep \in (0,1)$,
and thereby we conclude that \eqref{uebound:ue} and
\eqref{uebound:Tmaxe} hold.
\end{proof}

\section{Proof of Theorem~\ref{thm1}}\label{Sec:thm1}
In this section we will prove Theorem~\ref{thm1}.
We first give an $L^r$-estimate for $\ue$ in terms of the mass $m$,
which will be used to derive an $L^{\infty}$-estimate for $\ue - \baru$.

%
%
\begin{lem}\label{Lem:ueLr}
Suppose that $\theta \in (0,1]$, and that
$p$ satisfies \eqref{con:p-weak}.
Let $r \in (1, \infty)$.
Then there exists a constant $C > 0$ such that
\[
        \| \ue(\cdot, t) \|_{L^r(\Omega)}
        \le C e^{ -\frac{t}{r} }
                + Cm \left( 1 + m^{ \frac{2\theta (p-1)}{r} } + m^{ \frac{2\theta (p-1)}{r(1-a)} } \right)
\]
for all $t > 0$ and $\ep \in (0,1)$,
with some $a \in (0,1)$.
\end{lem}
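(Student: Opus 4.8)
The plan is to argue by a weighted $L^r$ testing procedure, feeding in the gradient estimate of Lemma~\ref{Lem:vexLq}. Fixing $r \in (1,\infty)$, I would test the first equation of \eqref{KSPP-R} against $r\ue^{r-1}$. Integrating by parts and using the Neumann condition, the diffusion term yields a negative multiple of the dissipation $\io|\nabla\ue^{r/2}|^2$, while the flux-limited term becomes $r(r-1)\chi\io \ue^{r-1}(|\nabla\ve|^2+\ep)^{\frac{p-2}{2}}\nabla\ue\cdot\nabla\ve$. Bounding $(|\nabla\ve|^2+\ep)^{\frac{p-2}{2}}|\nabla\ve| \le (|\nabla\ve|^2+\ep)^{\frac{p-1}{2}} \le c(1+|\nabla\ve|^{p-1})$ for $\ep\in(0,1)$, splitting off the factor $\ue^{r/2-1}|\nabla\ue|$, and applying Young's inequality to absorb a fraction of the dissipation, I arrive at a differential inequality of the shape $\tfrac1r\ddt\io\ue^r + c_1\io|\nabla\ue^{r/2}|^2 \le c_2\io\ue^r + c_2\io\ue^r|\nabla\ve|^{2(p-1)}$.

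The heart of the matter is the control of $\io\ue^r|\nabla\ve|^{2(p-1)}$. I would apply Hölder's inequality with conjugate exponents $s,s'$ to obtain $\io\ue^r|\nabla\ve|^{2(p-1)} \le \|\ue^{r/2}\|_{L^{2s}(\Omega)}^2\,\|\nabla\ve\|_{L^{2(p-1)s'}(\Omega)}^{2(p-1)}$, and then invoke Lemma~\ref{Lem:vexLq} with $q := 2(p-1)s'$ to dominate the last factor by $c(e^{-\mu t}+m^{2\theta(p-1)})$ for some $\mu>0$. The factor $\|\ue^{r/2}\|_{L^{2s}(\Omega)}^2$ I would estimate through the Gagliardo--Nirenberg inequality, interpolating between $\|\nabla\ue^{r/2}\|_{L^2(\Omega)}$ and $\|\ue^{r/2}\|_{L^{2/r}(\Omega)} = m^{r/2}$; this interpolation produces precisely the exponent $a\in(0,1)$ appearing in the statement. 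A further Young inequality, legitimate because $a<1$, then absorbs the resulting power $\|\nabla\ue^{r/2}\|_{L^2(\Omega)}^{2a}$ back into the dissipation, at the cost of the time-independent masses $m^{r+2\theta(p-1)}$ and $m^{r+\frac{2\theta(p-1)}{1-a}}$ together with exponentially decaying multiples of $m^r$.

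I expect the main obstacle to be the simultaneous choice of $s$ and $s'$, which is the single place where \eqref{con:p-weak} enters. On one side, Lemma~\ref{Lem:vexLq} forces $q=2(p-1)s'$ into the admissible range \eqref{vexLq:q}, which in the regime $\tfrac1n<\theta\le1$ reads $q<\tfrac{n}{n\theta-1}$. On the other side, validity of the Gagliardo--Nirenberg step, i.e.\ $\ue^{r/2}\in W^{1,2}(\Omega)\embd L^{2s}(\Omega)$ with $a<1$, requires $2s$ below the critical Sobolev exponent, equivalently $s'>\max\{1,\tfrac n2\}$, equivalently $q>n(p-1)$. Thus I must locate $q$ with $n(p-1)<q<\tfrac{n}{n\theta-1}$, and such $q$ exists exactly because \eqref{con:p-weak} guarantees $n(p-1)<\tfrac{n}{n\theta-1}$; in the complementary regime $\theta\le\tfrac1n$ the admissible range for $q$ is unbounded and no restriction on $p$ is needed. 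This is the same threading of exponents already exploited in Lemma~\ref{Lem:uebound}, and checking the compatibility of the two competing constraints under \eqref{con:p-weak} is the delicate point.

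Assembling these bounds gives an absorptive differential inequality $\ddt\io\ue^r + \io\ue^r \le g(t)$, where $g$ gathers the stationary masses $c(m^r+m^{r+2\theta(p-1)}+m^{r+\frac{2\theta(p-1)}{1-a}})$ and exponentially decaying terms; the normalization of the coefficient in front of $\io\ue^r$ to $1$ is obtained by estimating $\io\ue^r$ itself once more via Gagliardo--Nirenberg against $m^{r/2}$, so as to retain a genuine $-\io\ue^r$ on the left. Integrating yields $\io\ue^r(\cdot,t)\le e^{-t}\io u_0^r + \int_0^t e^{-(t-\sigma)}g(\sigma)\,\intd{\sigma}$, and taking the $r$-th root together with the subadditivity of $\tau\mapsto\tau^{1/r}$ delivers the asserted bound, with the decaying contributions merging into $Ce^{-\frac tr}$ and the three stationary masses into $Cm\bigl(1+m^{\frac{2\theta(p-1)}{r}}+m^{\frac{2\theta(p-1)}{r(1-a)}}\bigr)$. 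The remaining work---verifying the Young exponents and tracking the constants---is routine.
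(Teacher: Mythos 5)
Your argument is correct and coincides with the paper's own proof in every essential respect: testing with $\ue^{r-1}$, absorbing the flux term via Young, splitting $\io(|\nabla\ve|+1)^{2(p-1)}\ue^r$ by H\"older (your $(s,s')$ parametrization is just a relabeling of the paper's single exponent $s$), controlling the gradient factor by Lemma~\ref{Lem:vexLq}, interpolating $\|\ue^{r/2}\|_{L^{2s}(\Omega)}$ via Gagliardo--Nirenberg against the mass, threading the exponent constraint $n(p-1)<q<\frac{n}{n\theta-1}$ through \eqref{con:p-weak}, and integrating the resulting ODE before taking the $r$-th root. No substantive differences to report.
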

\begin{proof}
We test the first equation of \eqref{KSPP-R}
by $\ue^{r-1}$ to obtain
  \begin{align}\label{ue-test}
\notag
        \frac{1}{r} \frac{\rm d}{\intd{t}} \int_{\Omega} \ue^r
         + \frac{4(r-1)}{r^2} 
             \int_{\Omega} 
               \big| \nabla \ue^{ \frac{r}{2} } \big|^2
        &= (r-1)\chi \int_{\Omega} 
           (|\nabla \ve|^2 + \ep)^{ \frac{p-2}{2} }
           (\nabla \ve \cdot \nabla \ue)
           \ue^{r-1}
\\ \notag
        &\leq (r-1)\chi \int_{\Omega} 
           (|\nabla \ve| + 1)^{p-1} |\nabla \ue| \ue^{r-1}
\\
        &= \frac{2(r-1)\chi}{r} \int_{\Omega}
           (|\nabla \ve| + 1)^{p-1}
           \big| \nabla \ue^{ \frac{r}{2} } \big| 
            \ue^{ \frac{r}{2} } 
  \end{align}
in $(0,\infty)$ for all $\ep \in (0,1)$.
On account of the Young inequality,
we can estimate
  \begin{align}\label{young-Lr}
    \frac{2(r-1)\chi}{r} \int_{\Omega}
        (|\nabla \ve| + 1)^{p-1}
        \big| \nabla \ue^{ \frac{r}{2} } \big| 
         \ue^{ \frac{r}{2} } 
    \le \frac{r-1}{r^2} \int_{\Omega} 
          \big| \nabla \ue^{ \frac{r}{2} } \big|^2
      + (r-1) \chi^2 \int_{\Omega}
          (|\nabla \ve| + 1)^{2(p-1)} \ue^r
  \end{align}
in $(0,\infty)$ for any $\ep \in (0,1)$.
Besides,
by the H\"{o}lder inequality,
we infer that
  \begin{align}\label{Holder-Lr}
      \int_{\Omega} (|\nabla \ve|+1)^{2(p-1)} \ue^r
      \le
        \left[
            \int_{\Omega} (|\nabla \ve| + 1)^s
        \right]^{ 
          \frac{2(p-1)}{s} 
          }
        \left[
            \int_{\Omega} \ue^{
                      \frac{sr}{s-2(p-1)}
                      }
        \right]^{
          \frac{s-2(p-1)}{s}
          }
  \end{align}
in $(0,\infty)$ for all $\ep \in (0,1)$, where
  \begin{align}\label{con:s-1}
    s \in (2(p-1), \infty).
  \end{align}
From \eqref{ue-test}, \eqref{young-Lr}
and \eqref{Holder-Lr} we have
  \begin{align*}
\notag
        \frac{1}{r} \frac{\rm d}{\intd{t}} \int_{\Omega} \ue^r
         + \frac{3(r-1)}{r^2} 
             \int_{\Omega} 
               \big| \nabla \ue^{ \frac{r}{2} } \big|^2
        \leq (r-1) \chi^2
        \left[
            \int_{\Omega} (|\nabla \ve|+1)^s
        \right]^{ 
          \frac{2(p-1)}{s} 
          }
        \left[
            \int_{\Omega} \ue^{
                      \frac{sr}{s-2(p-1)}
                      }
        \right]^{
          \frac{s-2(p-1)}{s}
          }
  \end{align*}
and hence,
  \begin{align}\label{ineq:ue-Lr}
\notag
        &\frac{\rm d}{\intd{t}} \int_{\Omega} \ue^r
         + \int_{\Omega} \ue^r
         + \frac{3(r-1)}{r} 
             \int_{\Omega} 
               \big| \nabla \ue^{ \frac{r}{2} } \big|^2
\\
        &\quad\, \leq r (r-1) \chi^2
        \left[
            \int_{\Omega} (|\nabla \ve|+1)^s
        \right]^{ 
          \frac{2(p-1)}{s} 
          }
        \left[
            \int_{\Omega} \ue^{
                      \frac{sr}{s-2(p-1)}
                      }
        \right]^{
          \frac{s-2(p-1)}{s}
          }
         + \int_{\Omega} \ue^r 
  \end{align}
in $(0, \infty)$ for all $\ep \in (0,1)$,
with $s$ satisfying \eqref{con:s-1}.
We now estimate the first term
on the right-hand side of \eqref{ineq:ue-Lr}.
According to Lemma~\ref{Lem:vexLq},
we know that
  \begin{align}\label{app:Lq-ve}
\notag
        \left[ \int_{\Omega} (|\nabla \ve|+1)^s \right]^{ 
            \frac{2(p-1)}{s} }
        &\le c_1(\| \nabla\ve(\cdot, t) \|_{
            L^s(\Omega)}^{2(p-1)} + 1)
\\ \notag
        &\le c_2 (
           e^{- (1 + \lambda_1) t} + m^{\theta}
           )^{2(p-1)} + c_1
\\
        &\le c_3 ( 1 + m^{2\theta(p-1)} )
  \end{align}
in $(0, \infty)$ for any $\ep \in (0,1)$,
where $s$ satisfies \eqref{con:s-1} as well as
  \begin{align}\label{con:s-2}
    s \in [1, \infty) \quad\mbox{if}\ 0 < \theta \le \frac{1}{n},
    \quad\mbox{and}\quad 
    s \in \left[1, \frac{n}{n\theta-1}\right) \quad\mbox{if}\ \frac{1}{n} < \theta \le 1.
  \end{align}
Moreover,
an application of the Gagliardo--Nirenberg inequality
(cf.\ \cite[Proposition~A.1]{DGS-2001})
shows that
  \begin{align}\label{app:GN-ue}
\notag
        \left[
            \int_{\Omega} \ue^{
                      \frac{sr}{s-2(p-1)}
                      }
        \right]^{
          \frac{s-2(p-1)}{s}
          }  
        &= \| \ue^{ \frac{r}{2} } \|_{
           L^{ \frac{2s}{s-2(p-1)} }(\Omega)
           }^2
\\ \notag
        &\le c_4 \| \nabla \ue^{ \frac{r}{2} } \|_{
              L^2(\Omega)
           }^{2a}
           \| \ue^{ \frac{r}{2} } \|_{
              L^{ \frac{2}{r} }(\Omega)
           }^{2(1-a)}
         + c_4 \| \ue^{ \frac{r}{2} } \|_{
              L^{ \frac{2}{r} }(\Omega)
           }^2
\\
        &= c_4 \| \nabla \ue^{ \frac{r}{2} } \|_{
              L^2(\Omega)
           }^{2a}
           m^{r(1-a)}
         + c_4 m^r
  \end{align}
in $(0, \infty)$ for every $\ep \in (0,1)$,
where $s$ fulfills \eqref{con:s-1}, \eqref{con:s-2} and
  \begin{align}\label{con:s-3}
    s \in (n(p-1), \infty),
  \end{align}
and $a := \frac{
    \frac{r}{2} - \frac{s-2(p-1)}{2s}
}{
    \frac{r}{2} + \frac{1}{n} - \frac{1}{2}
} \in (0,1)$.
Here
we can choose $s$
satisfying \eqref{con:s-1}, \eqref{con:s-2} and
\eqref{con:s-3},
since
the condition \eqref{con:p-weak}
enables us to see that
%
%
$n(p-1) <  \frac {n}{n\theta-1}$
when $\frac{1}{n} < \theta \le 1$.
Now,
invoking \eqref{app:Lq-ve}, \eqref{app:GN-ue}
and the Young inequality,
we obtain
  \begin{align}\label{fir-term}
\notag
        &r (r-1) \chi^2
        \left[
            \int_{\Omega} (|\nabla \ve| + 1)^s
        \right]^{ 
          \frac{2(p-1)}{s} 
          }
        \left[
            \int_{\Omega} \ue^{
                      \frac{sr}{s-2(p-1)}
                      }
        \right]^{
          \frac{s-2(p-1)}{s}
          }
\\ \notag
        &\quad\, \le c_5 (1 + m^{2\theta(p-1)})
            \| \nabla \ue^{ \frac{r}{2} } \|_{L^2(\Omega)}^{2a}
            m^{r(1-a)}
         + c_5 (1 + m^{2\theta(p-1)}) m^r
\\ \notag
        &\quad\, \le \frac{2(r-1)}{r} \int_{\Omega}
            \big| \nabla \ue^{ \frac{r}{2} } \big|^2
         + c_6 (1 + m^{2\theta(p-1)})^{ \frac{1}{1-a} } m^r
         + c_5 (1 + m^{2\theta(p-1)}) m^r
\\
        &\quad\, \le \frac{2(r-1)}{r} \int_{\Omega}
            \big| \nabla \ue^{ \frac{r}{2} } \big|^2
         + c_7 m^r \big(
             1 + m^{2\theta(p-1)} + m^{ \frac{2\theta(p-1)}{1-a} }
             \big)
  \end{align}
in $(0, \infty)$
for all $\ep \in (0,1)$,
with $s$ satisfying
\eqref{con:s-1}, \eqref{con:s-2} and \eqref{con:s-3}.
Next,
again by using the Gagliardo--Nirenberg inequality and the Young inequality,
we have
  \begin{align}\label{sec-term}
\notag
        \int_{\Omega} \ue^r
        &= \| \ue^{\frac{r}{2}} \|_{L^2(\Omega)}^2
\\ \notag
        &\le c_8 \| \nabla \ue^{ \frac{r}{2} } \|_{
              L^2(\Omega)
           }^{2b}
           \| \ue^{ \frac{r}{2} } \|_{
              L^{ \frac{2}{r} }(\Omega)
           }^{2(1-b)}
         + c_8 \| \ue^{ \frac{r}{2} } \|_{
              L^{ \frac{2}{r} }(\Omega)
           }^2
\\ \notag
        &= c_8 \| \nabla \ue^{ \frac{r}{2} } \|_{
              L^2(\Omega)}^{2b}
           m^{r(1-b)} + c_8 m^r
\\
        &\le \frac{r-1}{r} \int_{\Omega} 
            \big| \nabla \ue^{ \frac{r}{2} } \big|^2
         + c_9 m^r
  \end{align}
in $(0, \infty)$ for all $\ep \in (0,1)$,
where $b := \frac{
    \frac{r}{2} - \frac{1}{2}
}{
    \frac{r}{2} + \frac{1}{n} - \frac{1}{2}
} \in (0,1)$.
Plugging \eqref{fir-term} and \eqref{sec-term}
into \eqref{ineq:ue-Lr},
we finally derive the differential inequality
  \begin{align*}
        \frac{\rm d}{\intd{t}} \int_{\Omega} \ue^r
         + \int_{\Omega} \ue^r
        \le c_{10} m^r \big(
             1 + m^{2\theta (p-1)} + m^{ \frac{2\theta (p-1)}{1-a} }
             \big)
  \end{align*}
in $(0, \infty)$ for any $\ep \in (0,1)$.
Integrating this over $(0, t)$
for any $t \in (0, \infty)$,
we have
  \begin{align*}
      \int_{\Omega} \ue^r(\cdot, t)
      &\le e^{-t} \!\int_{\Omega} u_0^r
        + c_{10} m^r \big(
             1 + m^{2\theta (p-1)} + m^{ \frac{2\theta (p-1)}{1-a} }
             \big)
\\
      &\le c_{11} e^{-t}
        + c_{10} m^r \big(
             1 + m^{2\theta (p-1)} + m^{ \frac{2\theta (p-1)}{1-a} }
             \big)
  \end{align*}
for all $\ep \in (0,1)$,
which leads to the conclusion.
\end{proof}

We now derive
an $L^{\infty}$-estimate for $\ue - \baru$,
which is crucial to obtain \eqref{baru-m}.
In order to compute more directly, we introduce
\[
 \begin{cases}
    \uue(x,t) := \ue(x,t) - \baru,
  \\
    \vve(x,t) := \ve(x,t) - \baru^{\theta}
  \end{cases}
\]
for $\ep \in (0,1)$, $x \in \Ombar$ and $t > 0$.
Then $(\uue, \vve)$ satisfies the problem
\begin{align}\label{KSPP-R-2}
  \begin{cases}
    (\uue)_t=\Delta \uue - \chi \nabla \cdot 
                   \left(\ue (|\nabla \vve|^2 + \ep)^{\frac{p-2}{2}} \nabla \vve\right)
    & \mbox{in}\ \Omega \times (0, \infty),
  \\
    (\vve)_t=\Delta \vve - \vve + (\ue^{\theta} - \baru^{\theta})
    & \mbox{in}\ \Omega \times (0, \infty),
  \\
    \nabla \uue \cdot \nu=\nabla \vve \cdot \nu=0
    & \mbox{on}\ \pa\Omega \times (0, \infty),
  \\
    \uue(\cdot,0)=u_0 - \baru, \ \vve(\cdot,0)=v_0 - \baru^{\theta}
    & \mbox{in}\ \Omega.
  \end{cases}
\end{align}
%
%
%
\begin{lem}\label{Linf-uue}
Let $\theta \in (0,1]$, and suppose that $p$ satisfies \eqref{con:p-weak}.
Then there exist $C > 0$ and $t_1 > 0$ such that
\[
        \| \uue(\cdot, t) \|_{L^{\infty}(\Omega)}
        \le Cm^{\theta(p-1)+1} (1 + m^{\alpha} + m^{\beta})
    \quad
        \mbox{for all}\ t \ge t_1\ \mbox{and}\ \ep \in (0,1)
\]
with some $\alpha > 0$ and $\beta > 0$.
\end{lem}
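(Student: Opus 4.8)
The plan is to exploit the fact that $\uue$ has vanishing spatial average together with a variation-of-constants representation, and to estimate the flux tightly enough to recover the factor $m^{\theta(p-1)}$ carried by $\nabla\ve$. Since $\io \uue(\cdot,t) = m - |\Omega|\baru = 0$ for every $t > 0$, and since the first equation of \eqref{KSPP-R-2} is in divergence form (so that $\nabla\cdot(\ue(|\nabla\vve|^2+\ep)^{\frac{p-2}{2}}\nabla\vve)$ again has zero mean, by $\nabla\vve\cdot\nu = 0$ on $\pa\Omega$), both the free evolution of the initial data and the Duhamel integrand act on mean-zero functions, whence the Neumann heat semigroup supplies the improved decay factor $e^{-\lambda_1(t-\sigma)}$. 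Writing
\[
    \uue(\cdot,t) = e^{t\Delta}(u_0 - \baru)
    - \chi \int_0^t e^{(t-\sigma)\Delta}\nabla\cdot\big(\ue(|\nabla\vve|^2+\ep)^{\frac{p-2}{2}}\nabla\vve\big)(\cdot,\sigma)\,\intd{\sigma}
\]
and using $\nabla\vve = \nabla\ve$, the linear term is controlled by $c\,e^{-\lambda_1 t}\nrm{u_0-\baru}{L^\infty}$.

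For the nonlinear term I would fix the exponents exactly as in the proof of Lemma~\ref{Lem:uebound}: choose $q$ fulfilling \eqref{vexLq:q} with $n(p-1) < q$, then $r > n$ with $r(p-1) < q$, and set $\rho := \frac{qr}{q-r(p-1)}$, so that $\frac1r = \frac1\rho + \frac{p-1}{q}$. Applying the $L^r \to L^\infty$ smoothing estimate for $e^{(t-\sigma)\Delta}\nabla\cdot$ (\cite[Lemma~1.3~(iv)]{W-2010}), followed by the Hölder inequality with these exponents, yields a bound of the form
\[
    \big\|e^{(t-\sigma)\Delta}\nabla\cdot(\cdots)\big\|_{L^\infty(\Omega)}
    \le c\,\big(1 + (t-\sigma)^{-\frac12-\frac{n}{2r}}\big)e^{-\lambda_1(t-\sigma)}
        \,\nrm{\ue(\cdot,\sigma)}{L^\rho}\,\nrm{\nabla\ve(\cdot,\sigma)}{L^q}^{p-1},
\]
where $r > n$ makes the kernel $(t-\sigma)^{-\frac12-\frac{n}{2r}}$ integrable near $\sigma = t$. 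The decisive point is that the flux must be estimated \emph{tightly} as $\ue|\nabla\ve|^{p-1}$, so that the smallness of $\nabla\ve$ is fully exploited, rather than by the cruder $\ue(|\nabla\ve|+1)^{p-1}$, which would only reproduce the leading power $m$ already available for $\ue$ itself in Lemma~\ref{Lem:ueLr}.

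It then remains to insert the two a priori bounds. Lemma~\ref{Lem:vexLq} gives $\nrm{\nabla\ve(\cdot,\sigma)}{L^q} \le c(e^{-(1+\lambda_1)\sigma}+m^\theta)$, whose equilibrium part contributes $m^{\theta(p-1)}$ after raising to the power $p-1$, while Lemma~\ref{Lem:ueLr} with exponent $\rho$ yields $\nrm{\ue(\cdot,\sigma)}{L^\rho} \le c\,e^{-\sigma/\rho} + c\,m(1+m^{\alpha_0}+m^{\beta_0})$ with some $\alpha_0,\beta_0 > 0$. Multiplying the two equilibrium factors produces the leading power $m\cdot m^{\theta(p-1)} = m^{\theta(p-1)+1}$, the remaining powers assembling into $1 + m^\alpha + m^\beta$; integrating against the integrable kernel then gives
\[
    \nrm{\uue(\cdot,t)}{L^\infty} \le c_1 e^{-\delta t} + c\,m^{\theta(p-1)+1}(1+m^\alpha+m^\beta)
\]
for all $t > 0$ and $\ep \in (0,1)$, with some $\delta > 0$ and $c_1$ depending on the data. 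Finally I would fix $t_1 > 0$ (independent of $\ep$) so large that $c_1 e^{-\delta t_1} \le c\,m^{\theta(p-1)+1}(1+m^\alpha+m^\beta)$, absorbing the transient and yielding the assertion for all $t \ge t_1$.

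The step I expect to be the main obstacle is precisely this tight estimation of the flux together with the bookkeeping of the powers of $m$. One must check that $q$ and $\rho$ can be chosen simultaneously in their admissible ranges, which is exactly where \eqref{con:p-weak}, via $n(p-1) < \frac{n}{n\theta-1}$, enters; and, more delicately, the $\ep$-regularized factor $(|\nabla\ve|^2+\ep)^{\frac{p-2}{2}}|\nabla\ve|$ must be dominated uniformly in $\ep \in (0,1)$ so that essentially only the power $|\nabla\ve|^{p-1}$ survives. This last point is immediate when $p \le 2$, since then $(|\nabla\ve|^2+\ep)^{\frac{p-2}{2}}|\nabla\ve| \le |\nabla\ve|^{p-1}$, but it requires additional care when $p > 2$. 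A secondary, routine, point is that the transient terms are not themselves powers of $m$, so the clean form is recovered only after $t \ge t_1$, which forces $t_1$ to depend on the initial data.
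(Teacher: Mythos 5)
Your proposal follows the paper's proof essentially verbatim: the same Duhamel representation for $\uue$, the same $L^{\infty}$-smoothing estimate for $e^{(t-\sigma)\Delta}\nabla\cdot$ combined with the H\"older inequality (your exponents $q$, $\rho$, $r$ play the roles of the paper's $k_1(p-1)$, $k_2$, $k$), the same insertion of Lemmas~\ref{Lem:vexLq} and~\ref{Lem:ueLr} to produce the leading product $m^{\theta(p-1)}\cdot m$, and the same absorption of the transient (non-power-of-$m$) terms by restricting to $t\ge t_1$. The one subtlety you flag --- dominating $(|\nabla \ve|^2+\ep)^{\frac{p-2}{2}}|\nabla \ve|$ by $|\nabla \ve|^{p-1}$ uniformly in $\ep$ when $p>2$ --- is likewise left implicit in the paper's estimate \eqref{uue-sec}, so your sketch is, if anything, more candid on that point.
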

\begin{proof}
%
%
We infer from the first equation of \eqref{KSPP-R-2} that
  \begin{align}\label{esti-uue}
\notag
        \| \uue(\cdot, t) \|_{L^{\infty}(\Omega)}
        &\le 
         \| e^{t\Delta} \uue(\cdot, 0) \|_{
             L^{\infty}(\Omega)}
\\ 
        &\quad\, 
         + \chi \int_{0}^{t}
          \| e^{(t - \sigma)\Delta} \nabla \cdot \big(
            \ue(\cdot, \sigma)
            ( | \nabla \vve(\cdot, \sigma) |^2 + \ep)^{
               \frac{p-2}{2}}
            \nabla \vve(\cdot, \sigma)
            \big) \|_{L^{\infty}(\Omega)}
        \, \intd{\sigma}
  \end{align}
for all $t > 0$ and $\ep \in (0,1)$.
We employ the Neumann heat semigroup estimate
(cf.\ \cite[Lemma~1.3~(i)]{W-2010})
to derive
  \begin{align}\label{esti-uue1}
\notag
         \| e^{t\Delta} \uue(\cdot, 0) \|_{
             L^{\infty}(\Omega)}
         &\le c_1 e^{-\lambda_1 t} 
                  \| \uue(\cdot, 0) \|_{L^{\infty}(\Omega)}
\\ 
         &\le c_2 e^{-\lambda_1 t}
  \end{align}
for all $t > 0$ and $\ep \in (0,1)$,
where $\lambda_1 > 0$ is the first nonzero
eigenvalue of $-\Delta$ in $\Omega$ under the Neumann boundary condition.
From now on, we estimate the second term
on the right-hand side of \eqref{esti-uue}.
We can apply \cite[Lemma~1.3~(iv)]{W-2010}
and the H\"{o}lder inequality to see that
  \begin{align}\label{uue-sec}
\notag  
      &\chi \int_{0}^{t}
          \| e^{(t - \sigma)\Delta} \nabla \cdot \big(
            \ue(\cdot, \sigma)
            ( | \nabla \vve(\cdot, \sigma) |^2 + \ep)^{
               \frac{p-2}{2}}
            \nabla \vve(\cdot, \sigma)
            \big) \|_{L^{\infty}(\Omega)}
        \, \intd{\sigma}
\\ 
      &\quad\, \le c_3 \int_{0}^{t} 
         \big( 
              1 + (t - \sigma)^{ 
                   -\frac{1}{2} - \frac{n}{2k}
                   }
            \big)
            e^{ - \lambda_1 (t - \sigma) }
              \| \nabla\vve(\cdot,\sigma) \|_{
                  L^{k_1 (p-1)}(\Omega)}^{p-1}
              \| \ue(\cdot,\sigma) \|_{
                  L^{k_2}(\Omega)}
                  \,\intd{\sigma}
  \end{align}
for all $t > 0$ and $\ep \in (0,1)$,
with $k_1 > n$ and $k_2 > n$ to be fixed later,
and $k > n$ satisfying
$\frac{1}{k} = \frac{1}{k_1} + \frac{1}{k_2}$.
In view of the obvious identity
$\nabla \vve = \nabla \ve$,
Lemma~\ref{Lem:vexLq} entails that
  \begin{align}\label{vve-k1}
     \| \nabla\vve(\cdot, \sigma) \|_{ L^{k_1 (p-1)}(\Omega)}^{p-1}
     \le c_4 (e^{-\lambda_1 (p-1)\sigma} + m^{\theta(p-1)})
  \end{align}
for all $\sigma > 0$ and $\ep \in (0,1)$,
where
  \begin{align}\label{con:k1-1}
    k_1 \in \left[\frac{1}{p-1}, \infty\right) \quad\mbox{if}\ 0 < \theta \le \frac{1}{n},
    \quad\mbox{and}\quad 
    k_1 \in \left[\frac{1}{p-1}, \frac{n}{(p-1)(n\theta-1)}\right) \quad\mbox{if}\ 
    \frac{1}{n} < \theta \le 1.
  \end{align}
We can actually choose $k_1 > n$ satisfying \eqref{con:k1-1},
since the condition \eqref{con:p-weak} ensures that
the relation
$n < \frac {n}{(p-1)(n-1)}$ holds
when $\frac{1}{n} < \theta \le 1$.
Recalling Lemma~\ref{Lem:ueLr},
we find a constant $a = a(k_2) \in (0,1)$ such that
  \[
    \| \ue(\cdot, \sigma) \|_{L^{k_2}(\Omega)}
    \le c_5 \!\left[ e^{ -\frac{\sigma}{k_2} }
           + m \left(
             1 + m^{ \frac{2\theta (p-1)}{k_2} } 
              + m^{ \frac{2\theta (p-1)}{ k_2(1-a)} } \right)
         \right]
  \]
for all $\sigma > 0$ and $\ep \in (0,1)$.
Collecting this and \eqref{vve-k1}
in \eqref{uue-sec}, we deduce that
  \begin{align}\label{I1to4}
\notag
    &\chi \int_{0}^{t}
          \| e^{(t - \sigma)\Delta} \nabla \cdot \big(
            \ue(\cdot, \sigma)
            ( | \nabla \vve(\cdot, \sigma) |^2 + \ep)^{
               \frac{p-2}{2}}
            \nabla \vve(\cdot, \sigma)
            \big) \|_{L^{\infty}(\Omega)}
        \, \intd{\sigma}
\\ \notag
    &\quad\, \le
       c_6 \int_{0}^{t} \big( 
              1 + (t - \sigma)^{ 
                   -\frac{1}{2} - \frac{n}{2k}
                   }
            \big)
            e^{ - \lambda_1 (t - \sigma) }
            e^{- \left(\lambda_1 (p-1) + \frac{1}{k_2}\right) 
            \sigma}
            \, \intd{\sigma}
\\ \notag
      &\qquad\,
        + c_6 m \left(
             1 + m^{ \frac{2\theta(p-1)}{k_2} } 
              + m^{ \frac{2\theta(p-1)}{ k_2(1-a)} } \right)
            \int_{0}^{t} \big( 
              1 + (t - \sigma)^{ 
                   -\frac{1}{2} - \frac{n}{2k}
                   }
            \big)
            e^{ - \lambda_1 (t - \sigma) }
            e^{ -\lambda_1 (p-1) \sigma}
            \, \intd{\sigma}
\\ \notag
      &\qquad\, 
        + c_6 m^{\theta(p-1)}
          \int_{0}^{t} \big( 
              1 + (t - \sigma)^{ 
                   -\frac{1}{2} - \frac{n}{2k}
                   }
            \big)
            e^{ - \lambda_1 (t - \sigma) }
            e^{ -\frac{\sigma}{k_2} }
            \, \intd{\sigma}
\\ \notag
      &\qquad\,
        + c_6 m^{\theta(p-1) + 1} \left(
             1 + m^{ \frac{2\theta(p-1)}{k_2} } 
              + m^{ \frac{2\theta(p-1)}{ k_2(1-a)} } \right)
            \int_{0}^{t} \big( 
              1 + (t - \sigma)^{ 
                   -\frac{1}{2} - \frac{n}{2k}
                   }
            \big)
            e^{ - \lambda_1 (t - \sigma) }
            \, \intd{\sigma}
\\ \notag
      &\quad\, =:
         c_6 I_1(\cdot, t) 
         + c_6 m \left(
             1 + m^{ \frac{2\theta(p-1)}{k_2} } 
              + m^{ \frac{2\theta(p-1)}{ k_2(1-a)} } \right)
            I_2(\cdot, t)
\\
      &\qquad\, \ 
        + c_6 m^{\theta(p-1)} I_3(\cdot, t)
        + c_6 m^{\theta(p-1)+1} \left(
             1 + m^{ \frac{2\theta(p-1)}{k_2} } 
              + m^{ \frac{2\theta(p-1)}{ k_2(1-a)} } \right)
           I_4(\cdot, t)  
  \end{align}
for all $t > 0$ and $\ep \in (0,1)$.
By virtue of \cite[Lemma~1.2]{W-2010}, we deduce
  \begin{align}
\notag  
    I_1(\cdot, t)
    &\le c_7 \big(1 + t^{
       \min \{ 0, 1 - \frac{1}{2} - \frac{n}{2k} \}} 
       \big)
           e^{-\!\min \!\big\{ \!
           \lambda_1, \lambda_1 (p-1) + \frac{1}{k_2}\!
            \big\} t}
\\
    &= 2c_7 e^{-\!\min \!\big\{ \!
           \lambda_1, \lambda_1 (p-1) + \frac{1}{k_2}\!
            \big\} t} \label{I1}
  \end{align}
for all $t > 0$, with $k_2$ satisfying
  \begin{align}\label{con:k2-1}
      \lambda_1 \neq \lambda_1 (p-1) + \frac {1}{k_2}.
  \end{align}
Also, \cite[Lemma~1.2]{W-2010} provides $\delta = \delta(p) \in (0,1]$
such that
  \begin{align}
\notag  
  I_2(\cdot, t)
    &\le c_8 \big(1 + t^{
       \min \{ 0, 1 - \frac{1}{2} - \frac{n}{2k} \}} 
       \big)
           e^{-\lambda_1 \delta t}
\\
    &= 2c_8 e^{-\lambda_1 \delta t} \label{I2}
  \end{align}
for all $t > 0$.
Similarly, we utilize \cite[Lemma~1.2]{W-2010} again to obtain
  \begin{align}
\notag  
    I_3(\cdot, t)
    &\le c_9 \big(1 + t^{
       \min \{ 0, 1 - \frac{1}{2} - \frac{n}{2k} \}} 
       \big)
           e^{-\!\min \!\big\{ \!
           \lambda_1, \frac{1}{k_2}\!
            \big\} t}
\\
    &= 2c_9 e^{-\!\min \!\big\{ \!
           \lambda_1, \frac{1}{k_2}\!
            \big\} t} \label{I3}
  \end{align}
for all $t > 0$, provided that
  \begin{align}\label{con:k2-2}
    \lambda_1 \neq \frac{1}{k_2}.
  \end{align}
On the other hand, recalling that $k > n$,
we see that
  \begin{align}\label{I4}
    I_4(\cdot, t) 
    = \int_{0}^{\infty} 
       (1 + \sigma^{-\frac{1}{2} - \frac{n}{2k}})
       e^{-\lambda_1 \sigma}
       \, \intd{\sigma}
    < \infty
  \end{align}
for all $t > 0$.
Now,
let $k_1 > n$ be as in \eqref{con:k1-1},
and take $k_2 > n$
large enough so that $k_2$ satisfies
\eqref{con:k2-1}, \eqref{con:k2-2} and
$\frac{1}{k_1} + \frac{1}{k_2} < \frac{1}{n}$.
Then, by plugging \eqref{I1}, \eqref{I2},
\eqref{I3} and \eqref{I4} into \eqref{I1to4}
we can derive that
  \begin{align*}
        &\chi \int_{0}^{t}
          \| e^{(t - \sigma)\Delta} \nabla \cdot \big(
            \ue(\cdot, \sigma)
            ( | \nabla \vve(\cdot, \sigma) |^2 + \ep)^{
               \frac{p-2}{2}}
            \nabla \vve(\cdot, \sigma)
            \big) \|_{L^{\infty}(\Omega)}
        \, \intd{\sigma}
\\ 
        &\quad\, \le c_{10} e^{-\!\min \!\big\{ \!
           \lambda_1, \lambda_1 (p-1) + \frac{1}{k_2}
            \big\} t}
          + c_{10} m \left(
             1 + m^{ \frac{2\theta(p-1)}{k_2} } 
              + m^{ \frac{2\theta(p-1)}{ k_2(1-a)} } \right)
                e^{-\lambda_1 \delta t}
\\
        &\qquad\,
          + c_{10} m^{\theta(p-1)} e^{-\!\min \!\big\{ \!
           \lambda_1, \frac{1}{k_2}\!
            \big\} t}
          + c_{10} m^{\theta(p-1)+1} \left(
             1 + m^{ \frac{2\theta(p-1)}{k_2} } 
              + m^{ \frac{2\theta(p-1)}{ k_2(1-a)} } \right)
  \end{align*}
for all $t > 0$ and $\ep \in (0,1)$,
with some $a=a(k_2)\in (0,1)$.
Inserting this and \eqref{esti-uue1} into
\eqref{esti-uue} yields
  \begin{align*}
    \| \uue(\cdot, t) \|_{L^{\infty}(\Omega)}
    & \le c_{2} e^{-\lambda_1 t}
          + c_{10} e^{-\!\min \!\big\{ \!
           \lambda_1, \lambda_1 (p-1) + \frac{1}{k_2}
            \big\} t}
\\
    &\quad\, + c_{10} m \left(
             1 + m^{ \frac{2\theta(p-1)}{k_2} } 
              + m^{ \frac{2\theta(p-1)}{ k_2(1-a)} } \right)
                e^{-\lambda_1 \delta t}
\\
        &\quad\,
          + c_{10} m^{\theta(p-1)} e^{-\!\min \!\big\{ \!
           \lambda_1, \frac{1}{k_2}\!
            \big\} t}
          + c_{10} m^{\theta(p-1)+1} \left(
             1 + m^{ \frac{2\theta(p-1)}{k_2} } 
              + m^{ \frac{2\theta(p-1)}{ k_2(1-a)} } \right)
  \end{align*}
for all $t > 0$ and $\ep \in (0,1)$,
and hence we arrive at the conclusion.
\end{proof}

We are now in a position to complete the proof of Theorem~\ref{thm1}.

\begin{prth1.1}
By virtue of \eqref{uebound:ue},
we argue as in the proof of \cite[Lemmas 4.2 and 4.3]{YL-2020}
to see that there exist
functions $u,v$ defined on $\Omega \times (0, \infty)$
as well as a sequence $(\ep_k)_{k\in\N} \subset (0,1)$ such that
%
\begin{align}
    &u_{\ep_k} \wsc u
    \quad\mbox{in}\ L^{\infty}(0, \infty; L^{\infty}(\Omega)),
    \label{final:u1}
\\ 
    &u_{\ep_k} \to u
    \quad\mbox{in}\ C^0_{\rm loc}([0, \infty); (W_0^{2,2}(\Omega))^*)
    \quad\mbox{and}
    \label{conv:dual}
\\
    &v_{\ep_k} \wsc v
    \quad\mbox{in}\ L^{\infty}(0, \infty; L^{\infty}(\Omega))
    \label{final:v1}
\end{align}
as $\ep = \ep_k \dwto 0$.
We first claim that $(u, v)$ is a global weak solution to \eqref{KSPP}.
To verify this we fix an arbitrary $T > 0$.
Then, due to arguments similar to those in \cite[Lemmas 3.2--3.5]{YL-2020},
we find a subsequence,
still denoted by $(\ep_k)_{k\in\N}$, and nonnegative functions $\ub, \vb$
such that
\begin{align}
\notag
    &u_{\ep_k} \to \ub
    \quad\mbox{a.e.\ in}\ \Omega\times (0, T),
\\
    &u_{\ep_k} \wc \ub
    \quad\mbox{in}\ L^2(\Omega \times (0,T)),
    \label{final:u2}
\\
    &v_{\ep_k} \to \vb
    \quad\mbox{in}\ L^{2}(0, T; W^{1,2}(\Omega)),
    \label{final:v2}
\\ \notag
    &\nabla v_{\ep_k} \to \nabla \vb
        \quad\mbox{a.e.\ in}\ \Omega\times (0, T)\quad\mbox{and}
\\ \notag
    &(|\nabla v_{\ep_k}|^2 + \ep_k)^{\frac{p-2}{2}}\nabla v_{\ep_k}
    \wc |\nabla \vb|^{p-2}\nabla \vb
    \quad \mbox{in}\ L^{p'}(\Omega \times (0, T))
\end{align}
as $\ep = \ep_k \dwto 0$,
and that $(\ub,\vb)$ is a weak solution of \eqref{KSPP} in $\Omega \times (0, T)$.
According to \eqref{final:u1}, \eqref{final:v1}, \eqref{final:u2} and \eqref{final:v2},
we observe that $(\ub, \vb) = (u,v)$.
In consequence, this would show that $(u,v)$ is a weak solution of \eqref{KSPP}
in $\Omega \times (0,T)$ for all $T > 0$,
and that hence the claim holds.

Now Lemma~\ref{Linf-uue} along with \eqref{final:u1} warrants that
there exist $t_1 > 0$ and a null set $N \subset [t_1, \infty)$ such that
\begin{align}\label{esti:without:null}
        \| u(\cdot, t) - \baru \|_{L^{\infty}(\Omega)}
        \le c_1 m^{\theta(p-1)+1} (1 + m^{\alpha} + m^{\beta})
    \quad
        \mbox{for all}\ t \in [t_1, \infty) \setminus N
\end{align}
for some $\alpha > 0$ and $\beta > 0$.
Indeed, from \eqref{final:u1} it follows that
\[
\ue - \baru \wsc u - \baru
    \quad\mbox{in}\ 
          L^{\infty}( \Omega \times [t_1, \infty) ) 
\]
as $\ep = \ep_k \searrow 0$,
and then due to the weak lower semicontinuity of the norm,
we infer from Lemma~\ref{Linf-uue} that
  \begin{align*}
    \|u - \baru\|_{L^{\infty}(\Omega \times [t_1, \infty))}
    &\le \liminf_{\ep = \ep_k \searrow 0}
    \|\ue - \baru\|_{L^{\infty}(\Omega \times [t_1, \infty))}
\\
    &\le c_1 m^{\theta(p-1)+1}(1 + m^{\alpha} + m^{\beta})
  \end{align*}
for some $\alpha > 0$ and $\beta > 0$,
and moreover the measure theory ensures
existence of a null set $N \subset [t_1, \infty)$ such that
\[
  u(\cdot, t) - \baru \in L^{\infty}(\Omega)
      \quad\mbox{and}\quad 
  \|u(\cdot, t) - \baru\|_{L^{\infty}(\Omega)}
  \le \|u - \baru\|_{L^{\infty}(\Omega \times [t_1, \infty))}
\]
for all $t \in [t_1, \infty) \setminus N$.
We claim that the inequality \eqref{esti:without:null} actually holds
for every $t \in [t_1, \infty)$.
In order to show this, first,
for each $t\in [t_1, \infty)$ we can find
$(\widetilde{t}_j)_{j\in\N} \subset [t_1, \infty) \setminus N$
such that
$\widetilde{t}_j \to t$ as $j \to \infty$,
and extracting a subsequence if necessary we also have
\[
  u(\cdot, \widetilde{t}_j) \wsc \widetilde{u}
    \quad\mbox{in}\ 
          L^{\infty}(\Omega)
          \ \mbox{as}\  j\to\infty
\]
with some $\widetilde{u} \in L^{\infty}(\Omega)$.
On the other hand, \eqref{conv:dual} implies
\[
  u(\cdot, \widetilde{t_j}) \to u(\cdot, t)
      \quad\mbox{in}\ 
          (W_{0}^{2,2}(\Omega))^{\ast}
          \ \mbox{as}\  j\to\infty.
\]
We thus have $\widetilde{u} = u(\cdot, t)$,
and due to the weak lower semicontinuity of the norm,
we arrive at
  \begin{align*}
      \| u(\cdot, t) - \baru \|_{L^{\infty}(\Omega)}
      & \le \liminf_{j\to\infty}
          \| u(\cdot, \widetilde{t}_j) - \baru \|_{
              L^{\infty}(\Omega)}
\\
      & \le c_1 m^{\theta(p-1)+1}(1 + m^{\alpha} + m^{\beta})
  \quad
    \mbox{for all}\ t \in [t_1, \infty),
  \end{align*}
which proves the claim,
and hence establishes \eqref{baru-m}.
For the latter part of the theorem,
let $\eta_0$ be such that
\[
  c_1 \eta_0^{\theta(p-1)} (1 + \eta_0^{\alpha} + \eta_0^{\beta}) \le 1,
\]
and for each $\eta \in (0, \eta_0)$
fix $m = \| u_0 \|_{L^1(\Omega)}$ such that $m \le \eta$.
Then we have
  \begin{align*}
    \| u(\cdot, t) - \baru \|_{L^{\infty}(\Omega)}
    & \le \eta \cdot 
      c_1 \eta_0^{\theta(p-1)}(1 + \eta_0^{\alpha} + \eta_0^{\beta})
\\
    & \le \eta
  \end{align*}
for all $t \ge t_1$, and the proof is complete.
\qed 
\end{prth1.1}

%
\smallskip
\section*{Acknowledgments}
The author would like to thank the referee, whose comments improved the manuscript.

%

\small

\end{document}